\newcommand{\cmark}{\ding{51}}%
\newcommand{\xmark}{\ding{55}}%
\newtheorem{definition}{Definition}
\newtheorem{proposition}[definition]{Proposition}
\newtheorem{theorem}[definition]{Theorem}
\newtheorem{corollary}[definition]{Corollary}
\newtheorem{lemma}[definition]{Lemma}
\newtheorem{conjecture}[definition]{Conjecture}
\newcommand{\charf}[1]{\raisebox{\depth}{\(\chi\)}_{#1}}
\newcommand{\rstr}[1]{|_{#1}}
\newcommand{\og}{\textrm{\it og}}
\newcommand{\1}{{\rm 1\hspace*{-0.4ex}%
\rule{0.1ex}{1.52ex}\hspace*{0.2ex}}}
\newcommand{\comment}[1]{}
\newcommand{\R}{\mathbb R}
\newcommand{\ssp}{\textrm{\rm SSP}}
\newcommand{\tstab}{\textrm{\rm TSTAB}}
\newcommand{\aweb}[2]{\ensuremath{A^{#2}_{#1}}}
\newcommand{\emtext}[1]{\text{\em #1}}
\newcommand{\sm}{\setminus}
\def\drawfive{
\foreach \i in {0,1,2,3,4}{
    \draw[hedge] (72*\i+90:\radius) -- (72*\i+72+90:\radius); 
}
\foreach \i in {0,1,2,3,4}{
    \node[hvertex] (v\i) at (72*\i+90:\radius){}; 
}
}
\def\drawfivehole{
\foreach \i in {0,1,2,3,4}{
    \draw[hedge] (\angle*\i+90:\radius) -- (\angle*\i+\angle+90:\radius); 
}
\foreach \i in {0,1,2,3,4}{
    \node[hvertex] (v\i) at (\angle*\i+90:\radius){}; 
}
\node[hvertex] (c) at (0,0){};
}
\tikzstyle{hvertex}=[thick,circle,inner sep=0.cm, minimum size=2.5mm, fill=white, draw=black]
\tikzstyle{hedge}=[very thick]
\def\converti#1{\pgfmathparse{int(mod(#1*(\numberk+1),\numbern))}\pgfmathresult}
\def\nodelabels{}
\def\labeldist{12pt}
\newcommand{\antiweb}[2]{
\def\numbern{#1}
\pgfmathtruncatemacro{\numbernminusone}{\numbern-1}
\def\numberk{#2}
\def\angle{360/\numbern}

\pgfmathtruncatemacro{\firstnbh}{\numberk+1}
\pgfmathtruncatemacro{\lastnbh}{floor(\numbern/2)}

\foreach \i in {0,1,...,\numbernminusone}{  
  \pgfmathtruncatemacro{\myresult}{mod(\i*(\numberk+1),\numbern)}
  \node[hvertex] (v\myresult) at (90+\i*\angle:\radius){};
  \node at (90+\i*\angle:\labeldist+\radius){\nodelabels};
}

\foreach \i in {0,1,...,\numbernminusone}{  
  \foreach \j in {\firstnbh,...,\lastnbh}{
    \pgfmathtruncatemacro{\othervx}{mod(\i+\j,\numbern)}
    \draw[hedge] (v\i) -- (v\othervx);
  }
}
}
\title{$t$-perfection in $P_5$-free graphs}
\author{Henning Bruhn and Elke Fuchs}
\date{}
\begin{document}
\maketitle

\begin{abstract}
A graph is called $t$-perfect if its stable set polytope 
is fully described by non-negativity, edge and odd-cycle constraints.
We characterise $P_5$-free $t$-perfect  graphs
in terms of forbidden $t$-minors. Moreover, we show that 
$P_5$-free $t$-perfect  graphs can always be coloured with three colours, 
and that they can be recognised in polynomial time. 
\end{abstract}

\section{Introduction}

There are three quite different views on \emph{perfect graphs}, a view in
terms of colouring, a polyhedral and a structural view. 
Perfect graphs can be seen as: 
\begin{itemize}
\item the graphs for which the chromatic number $\chi(H)$ 
always equals the clique number $\omega(H)$, and that in any induced subgraph~$H$;
\item the graphs for which the \emph{stable set polytope}, the convex
hull of stable sets, is fully described by non-negativity and clique constraints; and
\item the graphs that do not contain any \emph{odd hole} (an induced cycle of odd length
at least~$5$) or their complements, \emph{odd antiholes}.
\end{itemize}
(The polyhedral characterisation is due to Fulkerson~\cite{Fulkerson72} and Chv\'atal~\cite{Chvatal75},
while the third item, the strong perfect graph theorem, 
was proved by Chudnovsky, Robertson, Seymour and Thomas~\cite{SPGT}.)

In this article, we work towards a similar threefold view on \emph{$t$-perfect}
graphs. 
These are graphs that, similar to perfect graphs, have a particularly simple
stable set polytope. For a graph to be $t$-perfect its stable set polytope
needs to be given by non-negativity, edge and odd-cycle constraints; for 
precise definitions we defer to the next section.
The concept of $t$-perfection, due to 
Chv\'atal~\cite{Chvatal75}, thus takes its motivation from the 
polyhedral aspect of perfect graphs. The corresponding colouring and
structural view, however, is still missing. For some graph classes, 
though, claw-free graphs for instance~\cite{tperfect}, 
the list of minimal obstructions for $t$-perfection is known. 
We extend this list to  $P_5$-free graphs.
(A graph is $P_5$-free if it does not contain the path on five vertices
as an induced subgraph.)

Perfection is preserved under vertex deletion, and the same 
is true for $t$-perfection. 
There is 
a second simple operation that maintains $t$-perfection:
a \emph{$t$-contraction}, which is only allowed at a vertex with stable neighbourhood,
contracts all the incident edges. 
Any graph obtained by a sequence of vertex deletions and $t$-contractions
is a \emph{$t$-minor}. 
The concept of $t$-minors makes it more convenient to characterise $t$-perfection 
in certain graph classes as it allows for more succinct lists
of obstructions. 

For that characterisation denote by  $C^k_n$ 
the $k$th power of the $n$-cycle $C_n$, that is, 
the the graph obtained from $C_n$ by adding an edge 
between any two vertices of distance at most~$k$ in $C_n$.
We, moreover, write $\overline G$ for the complement of a graph $G$, 
and $K_n$ for the complete graph on $n$ 
vertices and $W_n$ for the wheel with $n+1$ vertices.

\begin{theorem}\label{thmmainres}
Let $G$ be a $P_5$-free graph. 
Then $G$ is $t$-perfect if and only if it does not contain 
any of $K_4$, $W_5$, $C^2_7$, $\overline{C_{10}^{2}}$ or $\overline{C_{13}^{3}}$ as a $t$-minor. 
\end{theorem}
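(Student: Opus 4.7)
The plan is to prove each direction of Theorem~\ref{thmmainres} separately.

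For necessity, I would verify directly that each of $K_4$, $W_5$, $C_7^2$, $\overline{C_{10}^2}$, $\overline{C_{13}^3}$ fails to be $t$-perfect, by exhibiting in each case an explicit rational vector satisfying the non-negativity, edge and odd-cycle inequalities but lying outside the stable set polytope. Since $t$-perfection is preserved under vertex deletion and $t$-contraction, the exclusion of these five as $t$-minors is then necessary.

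For sufficiency, my approach is induction on $|V(G)|$. Let $G$ be a minimum counterexample: $P_5$-free, without any forbidden graph as $t$-minor, but not $t$-perfect. Standard preliminary reductions let us assume $G$ is connected, has no twins, and no isolated or universal vertex; each of these allows an appeal to the inductive hypothesis after a deletion or $t$-contraction. A crucial structural observation is that in a $P_5$-free graph the only induced odd cycles are $5$-cycles, because any induced $C_n$ with $n\geq 6$ already contains an induced $P_5$. So the entire odd-hole structure of $G$ reduces to placements of induced $C_5$'s, together with possible $K_4$-like dense pieces.

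From here, I would invoke a structural theorem for connected $P_5$-free graphs (for example, the Bacs\'o--Tuza result that every connected $P_5$-free graph has a dominating clique or dominating $P_3$) to locate a low-complexity dominating set, and then analyse systematically how the remaining vertices attach to a fixed induced $C_5$. $P_5$-freeness sharply restricts the admissible neighbourhoods on the cycle: each external vertex must see either many or very few cycle vertices in tightly prescribed patterns, or else a $P_5$ appears.

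The main obstacle, and the real heart of the theorem, is the case analysis that follows. For every admissible attachment pattern of external vertices to an induced $C_5$, and for every way in which two or more induced $5$-cycles can overlap in a $P_5$-free graph, I must show that any configuration preventing $t$-perfection already exposes one of $K_4$, $W_5$, $C_7^2$, $\overline{C_{10}^2}$, $\overline{C_{13}^3}$ as a $t$-minor. Here $t$-contractions are decisive: they can collapse chains or stable neighbourhoods and uncover the powers-of-cycle obstructions $C_7^2$, $\overline{C_{10}^2}$, $\overline{C_{13}^3}$, which are the subtle part of the list and suggest that the completeness of this list is the hardest single step. In each surviving case --- those in which none of the five obstructions can be produced --- I would then exhibit $t$-perfection of $G$ concretely, by identifying $G$ as (almost) bipartite, series-parallel, or otherwise reducing it to a known $t$-perfect class.
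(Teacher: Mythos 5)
There is a genuine gap, and it sits exactly where you flag ``the real heart of the theorem''. Your necessity direction is fine (the paper does the same: for the antiwebs and $C_7^2$ the vector $x\equiv\tfrac13$ lies in $\tstab$ but not in $\ssp$, and $t$-perfection is closed under taking $t$-minors). But for sufficiency your plan never explains how you would actually \emph{certify} $t$-perfection of the graphs that survive the case analysis. $t$-perfection is a polyhedral property; a dominating clique or dominating $P_3$ from Bacs\'o--Tuza, and a catalogue of admissible attachments of vertices to a $C_5$, give you combinatorial structure but no integrality statement about $\tstab(G)$. Your fallback --- ``identifying $G$ as (almost) bipartite, series-parallel, or otherwise reducing it to a known $t$-perfect class'' --- does not work for the graphs that actually remain: a $P_5$-free graph avoiding the five obstructions can contain many interlocking $5$-holes and is in general neither almost bipartite nor series-parallel, and ``otherwise reducing to a known $t$-perfect class'' is precisely the unproved step.

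The paper closes this gap with two ingredients you are missing. First, a new polyhedral gluing lemma: if $X=V(G_1)\cap V(G_2)$ is a \emph{harmonious cutset} (a generalisation of a clique separator controlling the parities of induced paths between its parts), then $z\in\ssp(G)$ whenever $z\rstr{G_1}\in\ssp(G_1)$ and $z\rstr{G_2}\in\ssp(G_2)$; hence no minimally $t$-imperfect graph has a harmonious cutset. This is what handles a non-dominating $5$-hole: the attachment analysis shows that a $5$-hole which fails to dominate forces a harmonious cutset, so in a minimal counterexample every $5$-hole dominates. Second, the surviving graphs are shown to be \emph{near-bipartite}, and the proof then invokes the characterisation of $t$-perfect near-bipartite graphs obtained from Shepherd and from Holm, Torres and Wagler (checking that the remaining obstructions on that list, such as $\aweb{13}{4}$, $\aweb{19}{7}$, larger odd wheels and even M\"obius ladders, all contain an induced $P_5$). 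Without an analogue of the gluing lemma and without this external polyhedral theorem for near-bipartite graphs, your induction has no way to terminate: the base of the argument (Tucker's perfection of $K_4$-free graphs with no odd hole, used when $G$ has no $5$-hole at all) covers only the easy case.
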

This answers a question of  Benchetrit~\cite[p.~76]{YohPhD}.

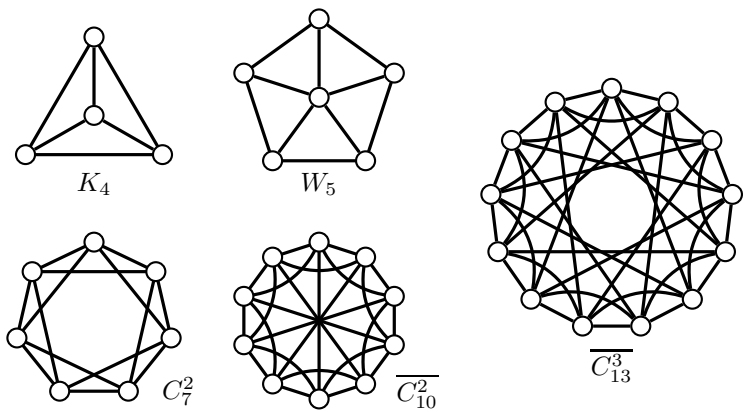
\begin{figure}[ht]
\centering
\begin{tikzpicture}[scale=0.8,auto]

\def\radius{1.3cm}
\def\hshift{3.7cm}
\def\vshift{-3.7cm}

\begin{scope}[shift={(0,-0.3)}]
\node[hvertex] (c) at (0,0){};
\foreach \i in {0,1,2}{
  \begin{scope}[on background layer] 
    \draw[hedge] (90+120*\i:\radius) -- (210+120*\i:\radius);
  \end{scope}
  \node[hvertex] (v\i) at (90+120*\i:\radius){};
  \draw[hedge] (c) -- (v\i);
}
\end{scope} 
\node at (0,-1.1*\radius){$K_4$};

\begin{scope}[shift={(\hshift,0)}]
\node[hvertex] (c) at (0,0){};
\foreach \i in {0,1,2,3,4}{
  \begin{scope}[on background layer] 
    \draw[hedge] (90+72*\i:\radius) -- (162+72*\i:\radius);
  \end{scope}
  \node[hvertex] (v\i) at (90+72*\i:\radius){};
  \draw[hedge] (c) -- (v\i);
}
\node at (0,-1.1*\radius){$W_5$};
\end{scope} 

\begin{scope}[shift={(0,\vshift)}]
\def\angle{360/7}
\foreach \i in {0,1,2,3,4,5,6}{
  \begin{scope}[on background layer] 
    \draw[hedge] (90+\angle*\i:\radius) -- (90+\angle+\angle*\i:\radius);
    \draw[hedge] (90+\angle*\i:\radius) -- (90+2*\angle+\angle*\i:\radius);
  \end{scope}
  \node[hvertex] (v\i) at (90+\angle*\i:\radius){};
}
\node at (320:1.4*\radius){$C^2_7$};
\end{scope} 

\begin{scope}[shift={(\hshift,\vshift)}]
\def\angle{360/10}
\foreach \i in {0,1,2,3,4}{
  \draw[hedge] (90+\angle*\i:\radius) -- (90+5*\angle+\angle*\i:\radius);
}
\foreach \i in {0,1,2,3,4,5,6,7,8,9}{
  \begin{scope}[on background layer] 
    \draw[hedge] (90+\angle*\i:\radius) -- (90+\angle+\angle*\i:\radius);
    \draw[hedge,bend left=30] (90+\angle*\i:\radius) to (90+2*\angle+\angle*\i:\radius);
  \end{scope}
  \node[hvertex] (v\i) at (90+\angle*\i:\radius){};
}
\node at (320:1.4*\radius){$\quad\overline{C^2_{10}}$};
\end{scope} 

\begin{scope}[shift={(2.3*\hshift,0.5*\vshift)}]
\def\radius{2cm}

\def\angle{360/13}

\foreach \i in {0,1,...,12}{  
  \pgfmathtruncatemacro{\myresult}{mod(\i*4,13)}
  \node[hvertex] (v\myresult) at (90+\i*\angle:\radius){};
}

\foreach \i in {0,1,...,12}{  
  \pgfmathtruncatemacro{\othervx}{mod(\i+4,13)}
  \draw[hedge] (v\i) -- (v\othervx);
  \pgfmathtruncatemacro{\othervx}{mod(\i+5,13)}
  \draw[hedge, bend right=30] (v\i) to (v\othervx);
  \pgfmathtruncatemacro{\othervx}{mod(\i+6,13)}
  \draw[hedge] (v\i) -- (v\othervx);
}
\node at (0,-1.3*\radius){$\overline{C^3_{13}}$};

\end{scope}

\end{tikzpicture}

\caption{Forbidden $t$-minors in $P_5$-free graphs}
\label{culpritsfig}
\end{figure}

The forbidden graphs of the theorem are \emph{minimally $t$-imperfect}, in the sense that
they are $t$-imperfect but any of their proper $t$-minors are $t$-perfect.
Odd wheels, even M\"obius ladders (see Section~\ref{sec:nb}), the cycle power $C^2_7$ 
and the graph $\overline{C_{10}^{2}}$ are known to be minimally $t$-imperfect.
The graph $\overline{C_{13}^{3}}$ appears here for the first time as a minimally $t$-imperfect graph.
We prove this in Section~\ref{secMinAweb}, where we also present two more minimally $t$-imperfect graphs.

A starting point for Theorem~\ref{thmmainres}
was the observation of Benchetrit~\cite[p.~75]{YohPhD} 
that   $t$-minors of $P_5$-free graphs are again $P_5$-free. Thus, 
any occurring minimally $t$-imperfect graph will be $P_5$-free, too.
This helped to whittle down the list of prospective forbidden $t$-minors.
We prove Theorem~\ref{thmmainres} in Sections~\ref{sectionharm} and~\ref{secmainres}. 

A graph class in which $t$-perfection is quite well understood is the class of 
\emph{near-bipartite} graphs; these are the graphs that become bipartite whenever the neighbourhood
of any vertex is deleted. In the course of the proof of Theorem~\ref{thmmainres} we
make use of results of Shepherd~\cite{Shepherd95} and of Holm, Torres and Wagler~\cite{HTW10}:
together they yield a description of $t$-perfect near-bipartite graphs 
in terms of forbidden induced subgraphs. We discuss this in Section~\ref{sec:nb}.

As  a by-product of
the proof of Theorem~\ref{thmmainres} 
we also obtain a polynomial-time 
algorithm to check for $t$-perfection in $P_5$-free graphs (Theorem~\ref{polythm}). 

Finally, in Section~\ref{colsec}, we turn to the third defining aspect of
perfect graphs: colouring. 
Shepherd and Seb\H o conjectured that every $t$-perfect graph can be coloured with four colours, which would be tight. 
For $t$-perfect $P_5$-graphs 
we show (Theorem~\ref{P5colthm}) that already three colours suffice.
We, furthermore, offer a conjecture that would, if true, characterise 
$t$-perfect graphs in terms of (fractional) colouring, in a way that is quite
similar as for perfect graphs.

\medskip

We end the introduction with a brief discussion of the literature
on $t$-perfect graphs. A general treatment may be found 
in Gr\"otschel, Lov\'asz and Schrijver~\cite[Ch.~9.1]{GLS88}
as well as in Schrijver~\cite[Ch.~68]{LexBible}. The most comprehensive source
of literature references is surely the PhD thesis of Benchetrit~\cite{YohPhD}. 
A part of the literature is devoted to proving $t$-perfection for certain 
graph classes. For instance, 
Boulala and Uhry~\cite{BouUhr79} established the $t$-perfection of series-parallel graphs.
Gerards~\cite{Gerards89} extended this to  graphs that do not contain an 
\emph{odd-$K_4$} as a subgraph
(an odd-$K_4$ is a subdivision of $K_4$ in which every triangle
becomes an odd circuit).
Gerards and Shepherd~\cite{GS98} characterised the graphs with all subgraphs $t$-perfect,
while Barahona and Mahjoub~\cite{BM94} described the $t$-imperfect subdivisions of $K_4$.
Wagler~\cite{Wagler04} gave a complete description of the stable set polytope of antiwebs, the complements
of cycle powers. These are near-bipartite graphs that also play a prominent role in the proof 
of Theorem~\ref{thmmainres}.
See also  Wagler~\cite{Wagler2005} for an extension to a more general class of near-bipartite graphs. 
The complements of near-bipartite graphs are the quasi-line graphs. 
Chudnovsky and Seymour~\cite{ChuSey05}, 
and Eisenbrand, Oriolo, Stauffer and Ventura~\cite{Eisenbrand05} 
determined the precise structure of the stable set
polytope of quasi-line graphs. Previously, this was a conjecture of Ben Rebea~\cite{benrebea81}.

Algorithmic aspects of $t$-perfection were also studied: Gr\"otschel, Lov\'asz and Schrijver~\cite{GLS86}
showed that the max-weight stable set problem can be solved in polynomial-time in $t$-perfect graphs.
Eisenbrand et al.~\cite{EFGK03} found a combinatorial algorithm for the unweighted case.

\section{Definitions}
All the graphs  in this article are finite, simple and do not have parallel edges or loops. 
In general, we follow the notation of Diestel~\cite{Diestel}, where also any 
missing elementary facts about graphs may be found.

Let $G=(V,E)$ be a graph.
The \emph{stable set polytope} $\ssp(G)\subseteq\mathbb R^{V}$ of $G$ 
is defined as the convex 
hull of the characteristic vectors of stable, i.e.\ independent, subsets of $V$.
The characteristic vector of a subset $S$ of the set $V$ is the vector $\charf{S}\in \{0,1\}^{V}$ with $\charf{S}(v)=1$ if $v\in S$ and $0$ otherwise. 
We define a second polytope $\tstab(G)\subseteq\mathbb R^V$ for $G$, given by
\begin{eqnarray*}
&&x\geq 0,\notag\\
&&\label{tstab}x_u+x_v\leq 1\text{ for every edge }uv\in E,\\ 
&&\sum_{v\in V(C)}x_v\leq \left\lfloor\frac{ |C|}{2}\right\rfloor\text{ for every induced odd cycle }C
\text{ in }G.\notag
\end{eqnarray*}
These inequalities are respectively known as non-negativity, edge and
odd-cycle inequalities. Clearly, $\ssp(G)\subseteq \tstab(G)$.

Then, the graph $G$ is called \emph{$t$-perfect} if $\ssp(G)$ and
$\tstab(G)$ coincide. Equivalently, $G$ is $t$-perfect if and only if
$\tstab(G)$ is an integral polytope, i.e.\ if all its vertices
are integral vectors. It is easy to see that bipartite graphs are $t$-perfect.
The smallest \emph{$t$-imperfect} graph is $K_4$. Indeed, the vector $\tfrac{1}{3}\1$
lies in $\tstab(K_4)$ but not in $\ssp(K_4)$.

It is easy to verify that  vertex deletion preserves $t$-perfection.
Another operation that keeps $t$-perfection was found by 
Gerards and Shepherd~\cite{GS98}: whenever there is a vertex $v$, so that its
neighbourhood is stable, we may contract all edges incident with $v$
simultaneously. We will call this operation a  \emph{$t$-contraction at~$v$}.
Any graph that is  obtained from $G$ by a sequence of vertex deletions 
and $t$-contractions is a  \emph{$t$-minor of $G$}.
Let us point out that any $t$-minor of a $t$-perfect graph is again $t$-perfect.


\section{$t$-perfection in near-bipartite graphs}\label{sec:nb}

Part of the proof of Theorem~\ref{thmmainres} consists in a reduction to 
\emph{near-bipartite} graphs. A  graph is near-bipartite if it becomes bipartite
 whenever 
the neighbourhood of any of its vertices is deleted.
We will need a characterisation of $t$-perfect near-bipartite graphs in terms 
of forbidden induced subgraphs. Fortunately, such a characterisation follows 
immediately from results of Shepherd~\cite{Shepherd95} and of Holm, Torres and Wagler~\cite{HTW10}.

We need a bit of notation.
Examples of near-bipartite graphs are antiwebs:
an \emph{antiweb} $\overline{C_{n}^{k}}$ is the complement
of the $k$th power of the $n$-cycle $C_n$. The antiweb is 
\emph{prime} if $n \geq 2k+2$ and $k+1$, $n$ are relatively prime.  
We simplify the notation for  
antiwebs $\overline{C^k_n}$ slightly 
by writing $\aweb{n}{k}$ instead. 
\emph{Even M\"obius ladders}, the graphs $\aweb{4t+4}{2t}$, are  prime antiwebs; see  Figure~\ref{moebius8fig}
for the M\"obius ladder $\overline{C^2_8}$. We view $K_4$ alternatively as the smallest odd wheel $W_3$ or as 
the smallest even M\"obius ladder $\overline{C^0_4}$.
Trotter~\cite{Trotter75} found that prime antiwebs  give rise to facets in the stable set polytope---we
only need that prime antiwebs other than odd cycles are $t$-imperfect, a 
fact that is easier to check. 

\begin{figure}[ht]
\centering
\begin{tikzpicture}
\def\radius{1.5}
\tikzstyle{oben}=[line width=2pt,double distance=1.2pt,draw=white,double=black]
  \def\numbert{4}
  \def\angle{360/\numbert}
  \def\startangle{90+\angle/2}
  \def\endangle{90-\angle/2}
  \def\mxshift{0.6}
  \def\myshift{-2}

\begin{scope}[y={(0.3cm,0.6cm)}]
  \draw[hedge] (\startangle:\radius) ++ (\mxshift,\myshift) arc (\startangle:360+\endangle:\radius);

  \foreach \i in {1,...,\numbert}{
    \coordinate (u\i) at (\startangle+\i*\angle-\angle:\radius);
    \coordinate[shift={(\mxshift,\myshift)}] (v\i) at (\startangle+\i*\angle-\angle:\radius);
    \draw[oben] (u\i) -- (v\i);
  }
  \draw[oben] (u1) -- (v\numbert);
  \draw[oben] (v1) -- (u\numbert);
  \draw[oben] (\startangle:\radius) arc (\startangle:360+\endangle:\radius);

  \foreach \i in {1,...,\numbert}{
    \node[hvertex] at (u\i){};
    \node[hvertex] at (v\i){};
  }
\end{scope}

\begin{scope}[shift={(5.5,-0.5)}]
\def\radius{1.2cm}
\def\labeldist{10pt}
\antiweb{8}{2}
\end{scope}
\end{tikzpicture}
\caption{Two views of the M\"obius ladder on $8$ vertices}\label{moebius8fig}
\end{figure}
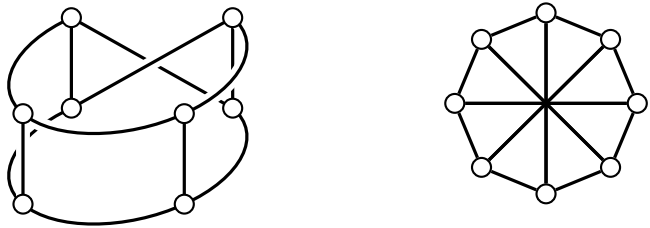

Shepherd proved:
\begin{theorem}[Shepherd~\cite{Shepherd95}]\label{shepthm}
Let $G$ be a near-bipartite graph. Then $G$ is $t$-perfect if and only if
\begin{enumerate}[\rm (i)]
\item $G$ contains no induced odd wheel; and
\item $G$ contains no induced prime antiweb other than possibly an odd hole.
\end{enumerate}
\end{theorem}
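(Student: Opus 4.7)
The necessity of (i) and (ii) is the easier half. Since $t$-perfection is inherited by vertex deletions, and hence by induced subgraphs, it suffices to exhibit, in every odd wheel and in every prime antiweb other than an odd hole, a vector of $\tstab$ that is not in $\ssp$. For the odd wheel $W_{2k+1}$ (with $k\ge 1$), assigning $\tfrac{1}{2k+1}$ to the hub and $\tfrac{k}{2k+1}$ to each rim vertex yields such a vector: the edge inequalities and all relevant odd-cycle inequalities (the rim $C_{2k+1}$ and the triangles through the hub) are tight or satisfied, while the coordinate sum equals $k+\tfrac{1}{2k+1}>k=\alpha(W_{2k+1})$. The corresponding statement for prime antiwebs is already recorded just before the theorem, following Trotter~\cite{Trotter75}.

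For sufficiency, my plan is to rely on a facet description of $\ssp(G)$ that is specific to near-bipartite $G$: every facet of $\ssp(G)$ arises from an induced subgraph of $G$ that is either a single vertex (non-negativity), an edge, an odd cycle, an odd wheel, or a prime antiweb. Granted such a description, under assumptions (i) and (ii) the only surviving facets of $\ssp(G)$ are non-negativity, edge, and odd-cycle facets, so $\ssp(G)=\tstab(G)$ and $G$ is $t$-perfect.

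The main obstacle is establishing this polyhedral description. A natural route passes through the complement: the complement of a near-bipartite graph is a quasi-line graph, and the results of Chudnovsky-Seymour~\cite{ChuSey05} and Eisenbrand-Oriolo-Stauffer-Ventura~\cite{Eisenbrand05} (resolving the conjecture of Ben Rebea~\cite{benrebea81}) give a complete facet description of $\ssp$ in the quasi-line case via clique and clique-family inequalities. Pulling this description back through complementation, each non-trivial facet of $\ssp(G)$ for near-bipartite $G$ is forced to have support on an induced subgraph of one of the listed combinatorial types. A more self-contained route, closer in spirit to Shepherd's original argument, would be to fix a facet $a^\top x\le b$ with support $S\subseteq V(G)$ and analyse, for each vertex $v\in S$, the induced structure of $N_G(v)\cap S$; the near-bipartite hypothesis is used repeatedly to force $G[S]$ to be an odd wheel or a prime antiweb whenever the facet is not of a trivial type. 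In either route, the delicate point is to ensure that the identified induced subgraph is not merely present but is aligned with the coefficient vector $a$ so that it genuinely yields the given facet.
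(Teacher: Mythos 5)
First, a point of reference: the paper does not prove this theorem at all. It is imported verbatim from Shepherd~\cite{Shepherd95}, so there is no in-paper argument to compare yours against; the only question is whether your attempt stands on its own. Your necessity half does: the vector you assign to $W_{2k+1}$ lies in $\tstab$ (the induced odd cycles of an odd wheel are exactly the rim and the hub triangles, and you verify both families) while violating the valid inequality $x(V)\le\alpha(W_{2k+1})=k$; and $t$-imperfection of prime antiwebs other than odd holes is the fact the paper records just before the theorem.

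The sufficiency half, however, is not a proof but a reduction to an unproven polyhedral claim, and that claim is essentially the entire content of Shepherd's theorem. Three concrete problems. (1) The facet description you posit --- every facet of $\ssp(G)$ comes from an induced vertex, edge, odd cycle, odd wheel or prime antiweb --- is never established, and as stated it is not even the correct description: Shepherd's result is that every nontrivial facet of $\ssp(G)$ for near-bipartite $G$ has the form $\sum_{j=1}^{r} x(A_j)/\alpha(A_j)\le 1$ for a complete join of induced prime antiwebs $A_1,\ldots,A_r$. These are generally not rank inequalities of a single listed subgraph (the odd-wheel facet already carries coefficient $k$ on the hub), so even granting the description you must additionally show that any such join with a nontrivial part forces an induced odd wheel or forbidden prime antiweb; for instance the join of two $5$-holes is near-bipartite and carries the facet $x(V)\le 2$, and is excluded only because it contains an induced $W_5$. (2) Your proposed shortcut via complementation from the quasi-line facet description~\cite{ChuSey05,Eisenbrand05} does not work: outside the perfect case there is no antiblocking duality, so a linear description of $\ssp(\overline G)$ (equivalently, of the clique polytope of $G$) yields no linear description of $\ssp(G)$. (3) The ``more self-contained route'' you sketch is precisely Shepherd's actual argument, of which you supply no steps and whose delicate point you yourself flag as unresolved. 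As it stands the hard direction is missing; if the theorem is to be used as a black box, as the paper does, it should be cited rather than presented with a proof sketch.
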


Holm, Torres and Wagler~\cite{HTW10} gave a neat characterisation 
of $t$-perfect antiwebs. For us, however, a direct implication of the proof
of that characterisation
is more interesting: an antiweb
is $t$-perfect if and only if it does not contain any even M\"obius ladder,
or any of $\aweb{7}{1}$, $\aweb{10}{2}$, $\aweb{13}{3}$, $\aweb{13}{4}$, $\aweb{17}{4}$
and $\aweb{19}{7}$ as an induced subgraph.
We may omit $\aweb{17}{4}$ from that list as it contains an induced $\aweb{13}{3}$.
Combining the theorem of Holm et al.\ with Theorem~\ref{shepthm}
one obtains:

\begin{proposition}\label{propnearbip}
A near-bipartite graph is $t$-perfect if and only if 
it does not contain any odd wheel, any even M\"obius ladder,
or any of $\aweb{7}{1}$, $\aweb{10}{2}$, $\aweb{13}{3}$, $\aweb{13}{4}$
and $\aweb{19}{7}$ as an induced subgraph.
\end{proposition}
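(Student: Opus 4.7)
The plan is to combine Theorem~\ref{shepthm} (Shepherd) with the characterisation of $t$-perfect antiwebs of Holm, Torres and Wagler that is stated in the paragraph preceding the proposition. Both directions are then reasonably short.

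For necessity, I would note that $t$-perfection is preserved under taking induced subgraphs (since these are $t$-minors), so it suffices to verify that each graph in the list is itself $t$-imperfect. Odd wheels are $t$-imperfect by a direct inspection (this fact is already used inside Theorem~\ref{shepthm}); as pointed out right before the statement, even M\"obius ladders $\overline{C^{2t}_{4t+4}}$ are prime antiwebs, because $\gcd(2t+1,4t+4)=\gcd(2t+1,2)=1$ and $4t+4\geq 2(2t)+2$. For the five remaining antiwebs one just checks $n\geq 2k+2$ and $\gcd(k+1,n)=1$ in each of the pairs $(n,k)\in\{(7,1),(10,2),(13,3),(13,4),(19,7)\}$; none of them is an odd cycle since an odd $n$-cycle is $A^{(n-3)/2}_n$. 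Trotter's result, invoked in the form stated in the excerpt ("prime antiwebs other than odd cycles are $t$-imperfect"), then settles $t$-imperfection for all of them.

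For sufficiency, suppose $G$ is near-bipartite and contains none of the listed graphs as an induced subgraph. I verify the two hypotheses of Theorem~\ref{shepthm}. Condition~(i) holds by assumption. For condition~(ii), assume towards a contradiction that $G$ contains an induced prime antiweb $H$ that is not an odd hole. Then $H$ is $t$-imperfect, again by Trotter's result. Applying the Holm--Torres--Wagler characterisation to the antiweb $H$, we conclude that $H$ contains an induced even M\"obius ladder or an induced copy of one of $A^1_7$, $A^2_{10}$, $A^3_{13}$, $A^4_{13}$, $A^4_{17}$, $A^7_{19}$. In the first cases, $G$ already contains a forbidden induced subgraph and we are done; in the $A^4_{17}$ case we use the reduction, mentioned in the excerpt, that $A^4_{17}$ contains an induced $A^3_{13}$. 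Either way we contradict the assumption on $G$.

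The only step that requires genuine checking rather than bookkeeping is the induced-subgraph relation $A^3_{13}\subseteq A^4_{17}$, which is the one new ingredient not supplied verbatim by Shepherd or by Holm--Torres--Wagler. This is a finite verification: exhibit a 13-element subset $S$ of the vertex set of $\overline{C^4_{17}}$ whose induced subgraph is isomorphic to $\overline{C^3_{13}}$, by tracking which pairs in $S$ have cyclic distance $>4$ in $C_{17}$. This is routine but is really the only genuinely non-mechanical step of the whole argument; everything else is simply combining the two cited theorems.
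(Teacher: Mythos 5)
Your proposal is correct and follows essentially the same route as the paper, which obtains the proposition exactly as you describe: by combining Theorem~\ref{shepthm} with the Holm--Torres--Wagler characterisation of $t$-perfect antiwebs and discarding $\aweb{17}{4}$ because it contains an induced $\aweb{13}{3}$. The only step you flag as genuinely non-mechanical, namely the containment $\aweb{13}{3}\subseteq\aweb{17}{4}$, is in fact immediate from Trotter's criterion (Theorem~\ref{trotthm}), since $17\cdot 4\geq 13\cdot 5$ and $17\cdot 3\leq 13\cdot 4$.
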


\section{Minimally $t$-imperfect antiwebs}\label{secMinAweb}

For any characterisation of $t$-perfection in 
\emph{minimally $t$-imperfect}, that is, all graphs that are $t$-imperfect but 
whose proper $t$-minors are $t$-perfect.
Even M\"obius ladders and odd wheels, for instance, are known to be minimally $t$-imperfect.
This follows from
the result of Fonlupt and Uhry~\cite{FonUhr82} 
that \emph{almost bipartite} graphs are $t$-perfect; a graph is almost 
bipartite if it contains a vertex whose deletion renders it bipartite.
It is easy to check that any proper $t$-minor of an even M\"obius ladder or an odd wheel
is almost bipartite. 

All the other forbidden $t$-minors in Theorem~\ref{thmmainres} or Proposition~\ref{propnearbip}
are minimally $t$-imperfect, too.
That $C^2_7$ is minimally $t$-imperfect is proved in \cite{tperfect}. 
There, also minimality for $C^2_{10}$ is shown, which allows us to 
verify that~\aweb{10}{2} is minimally $t$-imperfect as well. 
Indeed, for this we first observe that~\aweb{10}{2} can be obtained
from $C^2_{10}$ by adding diagonals of the underlying $10$-cycle. 
The second necessary observation is that any two vertices directly 
opposite in the $10$-cycle form a so called \emph{odd pair}: 
any induced path between them has odd length. 
Minimality now follows from the result of  Fonlupt and Hadjar~\cite{FonHad02}
that adding an edge between the vertices of an odd pair preserves $t$-perfection.

\medskip
In this section, we prove that  \aweb{13}{3}, \aweb{13}{4}
and \aweb{19}{7} are minimally $t$-imperfect, which was not observed before.  
As prime antiwebs these are $t$-imperfect. This 
follows from Theorem~\ref{shepthm} but can also be seen directly
by observing that the vector $x\equiv\tfrac{1}{3}$
lies in $\tstab$ but not in $\ssp$ for any of the three graphs.

To show that the graphs are \emph{minimally} $t$-imperfect, it suffices
to consider the $t$-minors obtained from a single vertex deletion 
or from a single $t$-contraction. If these are $t$-perfect then the
antiweb is minimally $t$-imperfect. 

Trotter gave necessary and sufficient conditions when an antiweb contains
another antiweb:
\begin{theorem}[Trotter~\cite{Trotter75}]\label{trotthm}
$\aweb{n'}{k'}$ is an induced subgraph of $\aweb{n}{k}$ if and only if 
\begin{equation*} 
n(k'+1) \geq n' (k+1)  \text{ and }
nk'\leq n'k.
\end{equation*}
\end{theorem}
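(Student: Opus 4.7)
My plan is to study any induced embedding $\aweb{n'}{k'}\hookrightarrow \aweb{n}{k}$ via the cyclic gap sequence it induces. Let $S$ be the image, order $S$ cyclically as $s_0, s_1, \ldots, s_{n'-1}$ in $\Z/n\Z$, and set $g_i := s_{i+1}-s_i$, so that $\sum_i g_i = n$. Two vertices of $\aweb{n}{k}$ are non-adjacent iff their $C_n$-distance is at most $k$; hence any independent set of $\aweb{n}{k}[S]$ is a run of consecutive elements of $S$ whose intervening gaps sum to at most $k$.

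For the ``only if'' direction, I would establish that for every $i$,
\[
 g_i + g_{i+1} + \cdots + g_{i+k'} \;\geq\; k+1
 \qquad\text{and}\qquad
 g_i + g_{i+1} + \cdots + g_{i+k'-1} \;\leq\; k.
\]
The first follows from $\alpha(\aweb{n'}{k'})=k'+1$: otherwise the $k'+2$ consecutive points $s_i,\ldots,s_{i+k'+1}$ would all fit in an arc of length $\leq k$, yielding an independent set of forbidden size. The second comes from counting maximum independent sets. Any maximum independent set of $\aweb{n}{k}[S]$ is contained in the arc from its first to its last element; that arc then holds exactly $k'+1$ consecutive elements of $S$ (any more would violate $\alpha=k'+1$), so the set is itself a run $\{s_j,\ldots,s_{j+k'}\}$ with $g_j+\cdots+g_{j+k'-1}\leq k$. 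Since $\aweb{n'}{k'}$ has exactly $n'$ maximum independent sets (the $n'$ arcs of $k'+1$ consecutive vertices of $C_{n'}$), so does $\aweb{n}{k}[S]$, forcing all $n'$ starting indices to work. Summing the $n'$ copies of each inequality, in which each gap appears $k'+1$, resp.\ $k'$, times, yields $(k'+1)n \geq n'(k+1)$ and $k'n \leq n'k$.

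For the ``if'' direction, I would exhibit the explicit embedding $\phi(v_i) := \lfloor in/n' \rfloor$. The standard floor estimates
\[
 \Bigl\lfloor \tfrac{dn}{n'}\Bigr\rfloor \;\leq\; \phi(v_{i+d}) - \phi(v_i) \;\leq\; \Bigl\lceil \tfrac{dn}{n'}\Bigr\rceil
\]
combine with the two hypotheses as follows. If $v_i,v_j$ are at $C_{n'}$-distance $d\leq k'$, their images are at $C_n$-distance at most $\lceil k'n/n'\rceil \leq k$ (using $k'n\leq n'k$ and the integrality of $k$), hence non-adjacent. If $d\geq k'+1$, then one direction yields distance at least $\lfloor (k'+1)n/n' \rfloor \geq k+1$ (using $n(k'+1)\geq n'(k+1)$), while the symmetric ``other way around'' estimate handles cyclic wraparound.

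The main obstacle is the counting step of the ``only if'' direction: one must know precisely that $\aweb{n'}{k'}$ has $n'$ maximum independent sets and that every maximum independent set in $\aweb{n}{k}[S]$ is a run of consecutive elements of $S$. Both rest on the fact that for $n'\geq 2(k'+1)$ the maximum cliques of $C^{k'}_{n'}$ are exactly the $n'$ arcs of $k'+1$ consecutive vertices. The degenerate regime $n' \leq 2k'+1$ makes $\aweb{n'}{k'}$ edgeless and must be handled separately as an easy case.
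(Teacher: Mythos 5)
The paper does not prove Theorem~\ref{trotthm}; it is imported verbatim from Trotter~\cite{Trotter75}, so there is no internal argument to compare against and your proposal stands or falls on its own. Your ``if'' direction is correct: $\phi(v_i)=\lfloor in/n'\rfloor$ together with $\lfloor dn/n'\rfloor\le\phi(v_{i+d})-\phi(v_i)\le\lceil dn/n'\rceil$ and the integrality of $k$ and $k+1$ does give an induced embedding. Your first inequality is also sound, since it uses nothing beyond $\alpha(\aweb{n'}{k'})=k'+1$.

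The counting step for the second inequality, however, is a genuine gap: both structural facts you rest it on are false, and they fail precisely in the boundary regimes where that inequality is tight. The maximum cliques of $C^{k'}_{n'}$ are \emph{not} the $n'$ arcs when $n'=2k'+2$: there $C^{k'}_{n'}$ is $K_{n'}$ minus a perfect matching, so $\aweb{n'}{k'}$ has $2^{k'+1}$ maximum stable sets. Moreover, a maximum stable set of $\aweb{n}{k}[S]$ need not lie in an arc of length at most $k$ (hence need not be a run of consecutive elements of $S$) once $n\le 3k$. Both failures occur together in a concrete instance: $S=\{0,1,3,4,6,7\}$ induces $\aweb{6}{2}=3K_2$ in $\aweb{9}{3}$; this induced subgraph has $2^3=8$ maximum stable sets, only six of which are runs, and $\{0,3,6\}$ is a maximum stable set contained in no arc of length $\le 3$. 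So the bookkeeping ``exactly $n'$ maximum stable sets, each a run, hence every starting index yields gap-sum at most $k$'' collapses, even though the inequality itself still holds there. This is not an exotic corner: the paper applies the theorem with $\aweb{19}{7}$ as host, and $19\le 3\cdot 7$, so the regime where your argument breaks is one the paper actually uses. Finally, the degenerate regime is not merely ``an easy case'': $\aweb{4}{2}=\overline{K_4}$ is an induced subgraph of $\aweb{8}{3}=4K_2$ even though $nk'=16>12=n'k$, so the ``only if'' direction is literally false without standing hypotheses such as $n\ge 2k+2$ and $n'\ge 2k'+2$; you would need to add these (they hold for every antiweb the paper feeds into the theorem) and then supply a different argument for the second inequality when $2k+2\le n\le 3k$ or $n'=2k'+2$.
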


We fix the vertex set of any antiweb $\aweb{n}{k}$
to be $\{0,1, \dots , n-1 \}$, so that $ij$ is an edge of $\aweb{n}{k}$
if and only if $|i-j|\mod n>k$.

\begin{proposition}
The antiweb $\aweb{13}{3}$ is minimally $t$-imperfect.
\end{proposition}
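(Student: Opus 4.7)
The plan rests on the vertex-transitivity of the circulant graph $\aweb{13}{3}$, which lets me handle every single-step $t$-minor by looking at vertex~$0$ alone. First, the neighborhood $N(0)=\{4,5,6,7,8,9\}$ contains the edges $48$, $49$ and $59$ (all three pairs have cyclic distance $\geq 4$ in $C_{13}$), hence is not stable; so no $t$-contraction is available at~$0$, and by vertex-transitivity none is available anywhere in $\aweb{13}{3}$. Similarly, every vertex-deletion $t$-minor is isomorphic to $G:=\aweb{13}{3}-0$. Since $t$-perfection is preserved under $t$-minors, it remains to prove that $G$ is $t$-perfect.

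For this I would invoke Proposition~\ref{propnearbip}. Antiwebs are near-bipartite and near-bipartiteness is inherited by induced subgraphs, so $G$ is near-bipartite. It then suffices to verify that $G$ contains no odd wheel, no even M\"obius ladder, and none of $\aweb{7}{1}$, $\aweb{10}{2}$, $\aweb{13}{4}$, $\aweb{19}{7}$ as an induced subgraph ($\aweb{13}{3}$ itself is ruled out by size). Since $\omega(\aweb{13}{3})=\lfloor 13/4\rfloor=3$, the graph contains no $K_4$; this kills both $W_3$ and the smallest even M\"obius ladder $\aweb{4}{0}=K_4$. For $W_{2\ell+1}$ with $\ell\geq 2$ one would need an induced $C_{2\ell+1}$ inside some vertex's neighborhood; but $N(0)$ has only six vertices and, as listed above, induces merely the forest on the three edges $48$, $49$, $59$, which has no cycle at all, so no odd wheel is induced.

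The remaining cases---the four named antiwebs and the M\"obius ladders $\aweb{4t+4}{2t}$ with $t\geq 1$---I would dispatch with Trotter's Theorem~\ref{trotthm}: $\aweb{n'}{k'}$ embeds in $\aweb{13}{3}$ only if both $13(k'+1)\geq 4n'$ and $13k'\leq 3n'$. A one-line check against $(n',k')\in\{(7,1),(10,2),(13,4),(19,7)\}$ and $(4t+4,2t)$ for $t\geq 1$ shows that at least one of the two inequalities fails in every instance (typically the second). The main obstacle, to the extent there is one, is just the slight tedium of the Trotter bookkeeping; no deeper difficulty is lurking.
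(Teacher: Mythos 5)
Your proof is correct and takes essentially the same route as the paper's: since no $t$-contraction is available anywhere in $\aweb{13}{3}$, only (proper) induced subgraphs need to be checked, and these are handled by Proposition~\ref{propnearbip} together with Theorem~\ref{trotthm} and the fact that vertex neighbourhoods induce no cycles. If anything you are slightly more careful on the wheel case: the paper asserts in passing that every neighbourhood is stable (it is not --- $N(0)$ induces the forest with edges $48$, $49$, $59$), whereas you use only that it induces a forest, which is exactly what is needed to exclude odd wheels.
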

\begin{proof}
For $\aweb{13}{3}$ to be minimally $t$-imperfect,
every proper  $t$-minor $\aweb{13}{3}$ needs to be $t$-perfect.
As no vertex of $\aweb{13}{3}$ has a stable neighbourhood, 
any proper $t$-minor is a $t$-minor of a proper induced subgraph $H$ of $\aweb{13}{3}$. 
Thus, it suffices to show that any such $H$ is $t$-perfect. 

By Proposition~\ref{propnearbip},  $H$ is $t$-perfect unless
it contains an odd wheel or
 one of $\aweb{7}{1}$, \aweb{8}{2} or \aweb{10}{2} as an induced subgraph.
Since the neighbourhood of every vertex is stable, $H$ cannot contain any wheel. 
For the other graphs, we check the inequalities of Theorem~\ref{trotthm} and see that 
none can be contained in $H$. Thus, $H$ is $t$-perfect and $\aweb{13}{3}$
therefore minimally $t$-imperfect.
\end{proof}

\begin{figure}[ht]
\centering
\begin{tikzpicture}

\def\radius{2cm}
\def\nodelabels{$\converti{\i}$}
\def\labeldist{12pt}
\antiweb{13}{4}

\begin{scope}[shift={(6,0)}]
\def\angle{360/13}

\foreach \i in {2,3,5,6,7,8,10,11}{  
  \pgfmathtruncatemacro{\myresult}{mod(\i*5,13)}
  \node[hvertex] (v\myresult) at (90+\i*\angle:\radius){};
  \node at (90+\i*\angle:\labeldist+\radius){$\converti{\i}$};
}

\foreach \i in {0,5,6,7,8}{  
  \node[hvertex] (v\i) at (90:\radius){};
}
\node at (90:\labeldist+\radius){$\tilde 0$};

\foreach \i in {0,...,12}{  
  \foreach \j in {5,6}{
    \pgfmathtruncatemacro{\othervx}{mod(\i+\j,13)}
    \draw[hedge] (v\i) -- (v\othervx);
  }
}
\end{scope}

\def\nodelabels{}
\end{tikzpicture}

\caption{Antiweb $\aweb{13}{4}$, and its $t$-minor obtained by a $t$-contraction at~$0$}
\label{antiwebfig(4,13)}
\end{figure}
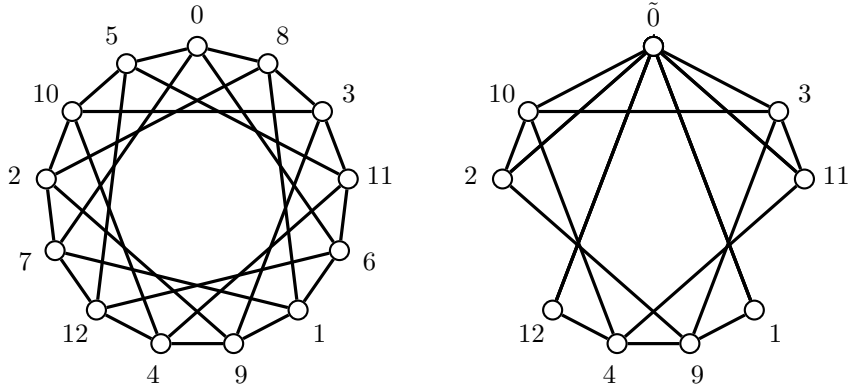
\begin{proposition}
The antiweb $\aweb{13}{4}$ is minimally $t$-imperfect.
\end{proposition}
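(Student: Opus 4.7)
The plan is to adapt the proof for $\aweb{13}{3}$, with one crucial new ingredient: every vertex of $\aweb{13}{4}$ has a \emph{stable} neighbourhood. By vertex-transitivity it suffices to note that $N(0)=\{5,6,7,8\}$ is stable, as any two of these vertices have cyclic distance at most $3$ in $\mathbb{Z}_{13}$. Consequently, $t$-contractions are now available, so unlike in the $\aweb{13}{3}$ case proper $t$-minors come in two varieties. Since $\aweb{13}{4}$ is a prime antiweb, it is $t$-imperfect, and to establish minimality it suffices, by vertex-transitivity, to show $t$-perfection of (a) every proper induced subgraph $H$, and (b) the graph $H_0$ obtained by a single $t$-contraction at vertex~$0$; every further $t$-minor then inherits $t$-perfection.

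For (a), since $\aweb{13}{4}$ is near-bipartite, so is $H$, and Proposition~\ref{propnearbip} reduces the task to ruling out a short list of induced subgraphs in $H$. Having at most $12$ vertices, $H$ cannot contain $\aweb{13}{3}$, $\aweb{13}{4}$, $\aweb{19}{7}$, nor any even M\"obius ladder on $16$ or more vertices. Odd wheels are excluded since each vertex of $H$ inherits a stable neighbourhood from $\aweb{13}{4}$, whereas the hub of a wheel must have a cycle in its neighbourhood. This leaves the prime antiwebs $\aweb{7}{1}$, $\aweb{8}{2}$, $\aweb{10}{2}$ and $\aweb{12}{4}$, and a direct check of the two inequalities in Theorem~\ref{trotthm} shows that each of them fails at least one inequality and therefore cannot embed as an induced subgraph in $\aweb{13}{4}$.

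For (b), I would use the following general observation. After merging $\{0\}\cup N(0)=\{0,5,6,7,8\}$ into the new vertex $\tilde 0$, the graph $H_0-\tilde 0$ is an induced subgraph of $\aweb{13}{4}-N(0)$, which is bipartite by near-bipartiteness. Hence $H_0$ is almost bipartite, and by the theorem of Fonlupt and Uhry it is $t$-perfect; all its $t$-minors are then $t$-perfect as well.

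The main obstacle, such as it is, lies purely in the bookkeeping for~(a): one must ensure that no near-bipartite obstruction on at most $12$ vertices has been overlooked, and then exclude each remaining candidate via Trotter's inequalities. The potentially harder new case~(b), which one might expect to be the principal difficulty compared with $\aweb{13}{3}$, is in fact essentially free: a $t$-contraction applied to any near-bipartite graph produces an almost-bipartite graph.
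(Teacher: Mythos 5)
Your proof is correct, and for the $t$-contraction step it takes a genuinely different route from the paper. For vertex deletions you argue exactly as the paper does (reduce via Proposition~\ref{propnearbip} to a short list and kill each candidate with Theorem~\ref{trotthm}); in fact you are slightly more careful, since you explicitly include the even M\"obius ladder $\aweb{12}{4}$ in the checklist, which the paper's list omits (it is ruled out by Trotter's second inequality, $13\cdot 4\not\leq 12\cdot 4$). For the contraction at~$0$, however, the paper re-applies Proposition~\ref{propnearbip} to the contracted nine-vertex graph and then excludes $\aweb{7}{1}$ and $\aweb{8}{2}$ by ad hoc degree counting, whereas you observe that $H_0-\tilde 0=\aweb{13}{4}-0-N(0)$ is an induced subgraph of the bipartite graph $\aweb{13}{4}-N(0)$, so $H_0$ is almost bipartite and hence $t$-perfect by the Fonlupt--Uhry theorem already quoted in Section~\ref{secMinAweb}. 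Your general observation --- a $t$-contraction in a near-bipartite graph always produces an almost bipartite graph --- is valid and buys more than the paper's computation: it disposes of the contraction case uniformly, and applied to $\aweb{19}{7}$ it would replace the lengthy case analysis in that proposition's proof as well. The paper's approach, by contrast, stays entirely within the framework of Proposition~\ref{propnearbip} and needs no appeal to almost bipartiteness, at the cost of graph-specific bookkeeping.
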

\begin{proof}
By Proposition~\ref{propnearbip}, any proper induced subgraph 
of \aweb{13}{4} that is $t$-imperfect contains one of 
$\aweb{7}{1}$, $\aweb{8}{2}$, or $\aweb{10}{2}$ as an induced subgraph;
note that \aweb{13}{4} does not contain odd wheels.
However, routine calculation and Theorem~\ref{trotthm}
show that \aweb{13}{4} contains neither of these. Therefore, 
deleting any vertex in \aweb{13}{4} always results in a $t$-perfect graph.

It remains to consider the graphs obtained from \aweb{13}{4} by a single 
$t$-contraction. By symmetry, it suffices check whether the graph 
$H$ obtained by $t$-contraction at~$0$ is $t$-perfect;
see Figure~\ref{antiwebfig(4,13)}. 
Denote by $\tilde{0}$ the new vertex that resulted from the contraction.

The graph $H$ is still near-bipartite and still devoid of odd wheels. 
Thus, by Proposition~\ref{propnearbip}, it
is $t$-perfect unless it contains  $\aweb{7}{1}$ and $\aweb{8}{2}$
as an induced subgraph---all the $t$-imperfect antiwebs  of Proposition~\ref{propnearbip}
are too large for the nine-vertex graph $H$.

Now, $\aweb{7}{1}$ is $4$-regular but $H$ only contains five vertices of 
degree at least~$4$. Similarly, $\aweb{8}{2}$ is $3$-regular but 
two of the nine vertices of $H$, namely~$1$ and~$12$, have 
degree~$2$. We see that neither of the two antiwebs can be contained in $H$, 
so that $H$ is $t$-perfect and, thus, $\aweb{13}{4}$ minimally $t$-imperfect. 
\end{proof}

\begin{figure}[ht]
\centering
\begin{tikzpicture}[scale=0.8,auto]

\def\radius{2.4cm}
\def\nodelabels{$\converti{\i}$}
\def\labeldist{14pt}
\antiweb{19}{7}

\begin{scope}[shift={(7,0)}]
\def\angle{360/19}

\foreach \i in {2,3,4,5,7,8,9,10,11,12,14,15,16,17}{  
  \pgfmathtruncatemacro{\myresult}{mod(\i*8,19)}
  \node[hvertex] (v\myresult) at (90+\i*\angle:\radius){};
  \node at (90+\i*\angle:\labeldist+\radius){$\converti{\i}$};
}

\foreach \i in {0,8,9,10,11}{  
  \node[hvertex] (v\i) at (90:\radius){};
}
\node at (90:\labeldist+\radius){$\tilde 0$};

\foreach \i in {0,...,18}{  
  \foreach \j in {8,9}{
    \pgfmathtruncatemacro{\othervx}{mod(\i+\j,19)}
    \draw[hedge] (v\i) -- (v\othervx);
  }
}
\end{scope}

\end{tikzpicture}

\caption{Antiweb $\aweb{19}{7}$, and its $t$-minor obtained by $t$-contraction at~$0$}
\label{antiwebfig(7,19)}
\end{figure}

\begin{proposition}
The antiweb $\aweb{19}{7}$ is minimally $t$-imperfect.
\end{proposition}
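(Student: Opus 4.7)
The plan is to follow the pattern of the previous two propositions. As a prime antiweb, $\aweb{19}{7}$ is $t$-imperfect: the all-$\tfrac{1}{3}$ vector lies in $\tstab$ but not in $\ssp$. For minimal $t$-imperfection, since $t$-perfection is preserved under $t$-minors and $\aweb{19}{7}$ is vertex-transitive, it will suffice to verify that deleting a single representative vertex and $t$-contracting at a single representative vertex each yield a $t$-perfect graph.

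For vertex deletion, I would argue that the resulting graph is an induced subgraph of $\aweb{19}{7}$, hence near-bipartite; moreover every neighbourhood in $\aweb{19}{7}$ is stable (the four neighbours of any vertex lie within a cyclic window of width~$3$), so no odd wheel can be induced. Plugging parameters into Trotter's Theorem~\ref{trotthm} rules out every forbidden prime antiweb of Proposition~\ref{propnearbip}, finishing this case.

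The substance of the proof lies in the $t$-contraction. I would first note that $N(0)=\{8,9,10,11\}$ is stable in $\aweb{19}{7}$ (pairwise cyclic distance at most~$3$), so the contraction is allowed; let $H$ denote the resulting $15$-vertex graph and $\tilde 0$ the contracted vertex, and compute $N_H(\tilde 0)=\{1,2,3,16,17,18\}$. A direct check shows every neighbourhood in $H$ remains stable, so $H$ is near-bipartite and wheel-free. Moreover $H$ is triangle-free: $\aweb{19}{7}$ itself has clique number $\lfloor 19/8\rfloor=2$, and the stability of $N_H(\tilde 0)$ prevents any triangle through $\tilde 0$. This disposes of the triangle-containing antiwebs $K_4$, $\aweb{7}{1}$, $\aweb{10}{2}$ and $\aweb{13}{3}$. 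A quick degree count excludes $\aweb{13}{4}$, which is $4$-regular on $13$ vertices while $H$ has only $11$ vertices of degree at least~$4$; the antiweb $\aweb{19}{7}$ itself and every even M\"obius ladder on more than $15$ vertices are simply too large.

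The main obstacle is the last pair of candidates, the $3$-regular even M\"obius ladders $\aweb{8}{2}$ and $\aweb{12}{4}$, both of which contain odd cycles. Since $H-\tilde 0$ splits into the independent sets $\{1,\ldots,7\}$ and $\{12,\ldots,18\}$ with no internal edges, it is bipartite, so any induced copy of either antiweb must include $\tilde 0$. The degree-$2$ vertices $1$ and $18$ must be excluded from the subgraph, and the induced-degree-$3$ constraint on $\tilde 0$ forces exactly one further vertex from $\{2,3,16,17\}$ to be excluded as well. For $\aweb{12}{4}$ this settles matters at once: only three vertices of $H$ are removed in total, so vertex~$4$ keeps all four of its neighbours $\{12,13,14,15\}$ and would have induced degree~$4$, contradicting $3$-regularity. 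For $\aweb{8}{2}$ I would run a case analysis over the four choices of excluded neighbour of $\tilde 0$, using forced propagations such as ``$2\in S$ forces $12,13\in S$'' (because $\deg_H(2)=3$); in each case one vertex of $S$ gets pushed down to induced degree~$2$, yielding the contradiction.
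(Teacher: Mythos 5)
Your proposal is correct and follows essentially the same route as the paper: reduce everything to Proposition~\ref{propnearbip}, handle vertex deletions via Theorem~\ref{trotthm}, and exclude the candidate antiwebs from the contracted graph $H$ by degree, size and triangle-freeness arguments, finishing with a case analysis for the $3$-regular M\"obius ladders $\aweb{8}{2}$ and $\aweb{12}{4}$. Your local variants all check out and the four-case propagation you only sketch for $\aweb{8}{2}$ does terminate in a degree-$\leq 2$ contradiction each time, just as in the paper's final paragraphs; the one inference to tighten is ``every neighbourhood is stable, so $H$ is near-bipartite'' --- stable neighbourhoods do not imply near-bipartiteness in general, so the (true) near-bipartiteness of $H$ needs its own short verification, which the paper likewise asserts without proof.
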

\begin{proof}
We claim that any proper induced subgraph of \aweb{19}{7} is $t$-perfect. 
Indeed, as \aweb{19}{7} does not contain any induced odd wheel, 
this follows from Proposition~\ref{propnearbip}, unless \aweb{19}{7} 
contains one of $\aweb{7}{1}, \aweb{8}{2},  \aweb{10}{2}, \aweb{12}{4}, \aweb{13}{3}, \aweb{13}{4}$, or $\aweb{16}{6}$ as an induced subgraph.
We can  easily verify with Theorem~\ref{trotthm} that this is not the case.

It remains to check that any $t$-contraction in \aweb{19}{7} yields a $t$-perfect
graph, too. By symmetry, we may restrict ourselves to a $t$-contraction
at the vertex~$0$. Let $H$ be the resulting graph, and let~$\tilde 0$ be 
the new vertex; see Figure~\ref{antiwebfig(7,19)}.

The graph $H$ is a near-bipartite graph on~$15$ vertices. It does not contain
any odd wheel as an induced subgraph. Thus, by  Proposition~\ref{propnearbip}, 
$H$ is $t$-perfect unless it has an induced subgraph $A$ that is isomorphic 
to a graph in 
\[
\mathcal A:= \{\aweb{7}{1}, \aweb{8}{2}, \aweb{10}{2}, \aweb{12}{4}, \aweb{13}{3}, \aweb{13}{4}\}.
\]
Since this is not the case for $\aweb{19}{7}$, we may assume that  $\tilde{0} \in V(A)$.

Note that the graphs \aweb{7}{1}, \aweb{10}{2}, \aweb{13}{3} and \aweb{13}{4}
have minimum degree at least~$4$. Yet, $\tilde 0$ has only two neighbours of
degree~$4$ or more (namely, $3$ and $16$). Thus, neither of these 
four antiwebs can occur as an induced subgraph in $H$. 

It remains to consider the case when $H$ contains an induced subgraph $A$ that is isomorphic
to $\aweb{8}{2}$ or to $\aweb{12}{4}$, both of which are $3$-regular graphs. 
In particular, $A$ is then contained in $H'=H-\{1,18\}$ as the vertices~$1$ and~$18$
have degree~$2$. 

As $H'$ has only~$13$ vertices, $A$ cannot be isomorphic to~$\aweb{12}{4}$
since deleting any single vertex of $H'$ never yields a $3$-regular graph. 
That leaves only $A=\aweb{8}{2}$. 

Since \aweb{8}{2} is $3$-regular, we need to delete exactly one of the 
four neighbours of~$\tilde 0$ in $H'$. Suppose this is the vertex~$3$. 
Then,  $12$ has degree~$2$ and thus cannot be part of~$A$. 
Deleting~$12$ as well leads to vertex~$2$ having degree~$2$, which 
thereby is also excluded from~$A$. This, however, is impossible
as~$2$ is one of the three remaining neighbours of~$\tilde 0$.

By symmetry, we may therefore assume that the neighbours of~$\tilde 0$
in $A$ are precisely $2,3,16$. That~$17$ is not part of~$A$ entails 
that the vertex~$7$ has degree~$2$ and thus cannot lie in $A$ either. 
Then, however, $16\in V(A)$ has degree~$2$ as well, which is impossible.
\end{proof}

\section{Harmonious cutsets}\label{sectionharm}

We investigate the structure of minimally $t$-imperfect
graphs, whether they are $P_5$-free or not. We hope 
this  more general setting might prove useful in subsequent
research.

A structural feature that may never appear in a minimally $t$-imperfect
graph $G$ is a \emph{clique separator}: any clique $K$ of $G$ so that $G-K$
is not connected. 

\begin{lemma}[Chv\'atal~\cite{Chvatal75}; Gerards~\cite{Gerards89}]\label{cliqueseplem}
No minimally $t$-imperfect graph  contains a clique separator.
\end{lemma}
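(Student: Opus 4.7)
The plan is a proof by contradiction: assume $G$ is minimally $t$-imperfect and contains a clique separator $K$, and derive that $G$ must itself be $t$-perfect. I would pick a partition $V(G)\setminus K = A \cup B$ into two nonempty parts with no edge of $G$ running between $A$ and $B$, and set $G_A := G[A\cup K]$ and $G_B := G[B\cup K]$. Both are proper induced subgraphs of $G$, hence proper $t$-minors, and by minimal $t$-imperfection of $G$ they are $t$-perfect. The contradiction will follow once I show that $G$ itself is $t$-perfect.

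The core step is a clique-gluing lemma: if $G_A$ and $G_B$ are both $t$-perfect and share exactly the clique $K$, then $G = G_A \cup G_B$ is $t$-perfect. To establish this, I would take any $x \in \tstab(G)$ and first check that the restriction $x|_{V(G_A)}$ lies in $\tstab(G_A)$. Indeed, every induced odd cycle of $G_A$ has its vertex set in $A\cup K$ and is therefore still induced in $G$, since no chord through $B$ can be added; the edge inequalities carry over trivially. Analogously $x|_{V(G_B)} \in \tstab(G_B)$. By $t$-perfection of the two parts, I can then write $x|_{V(G_A)} = \sum_S \alpha_S \chi_S$ over stable sets $S$ of $G_A$, and $x|_{V(G_B)} = \sum_T \beta_T \chi_T$ over stable sets $T$ of $G_B$.

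Next I would exploit that $K$ is a clique, so each $S$ meets $K$ in at most one vertex. For each label $v \in K \cup \{\bullet\}$, with $\bullet$ standing for empty intersection with $K$, set $\alpha(v) := \sum_{S:\, S\cap K = \{v\}}\alpha_S$ and define $\beta(v)$ analogously. Restricting both decompositions to $K$ forces $\alpha(v) = x_v = \beta(v)$ for every $v\in K$, and hence $\alpha(\bullet) = \beta(\bullet)$ as well. For each label $v$ with common weight $\alpha(v)>0$, I would take the product of the two conditional distributions over $S$ and $T$; because there are no edges between $A$ and $B$, each union $S\cup T$ is stable in $G$. Summing these product mixtures with weights $\alpha(v)$ should yield a convex combination of characteristic vectors of stable sets of $G$ equal to $x$, placing $x$ in $\ssp(G)$ as required.

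The main obstacle I anticipate is the bookkeeping in this last step: one must check that marginalising the product over $T$ recovers $x$ on every vertex of $A$, symmetrically for $B$, and that the $K$-coordinates come out right because the grouping by $v$ agrees on both sides. The weight identification $\alpha(v) = \beta(v)$ on $K$ is the real engine of the argument; once that matching is in place the gluing is forced, $G$ turns out $t$-perfect, and the contradiction with $t$-imperfection of $G$ completes the proof.
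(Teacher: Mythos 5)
Your proof is correct. Note that the paper itself gives no proof of this lemma: it is stated with attributions to Chv\'atal and Gerards and used as a black box, so there is nothing in the text to compare against line by line. What you have written is the standard clique-cutset composition argument, and it is exactly the right one. The two places that need care both check out: (i) the restriction $x|_{V(G_A)}$ does lie in $\tstab(G_A)$ because $G_A$ is an induced subgraph, so every induced odd cycle of $G_A$ is an induced odd cycle of $G$ and its inequality is inherited; and (ii) the weight-matching on $K$ works precisely because $K$ is a clique, so every stable set meets it in at most one vertex, giving $\alpha(v)=x_v=\beta(v)$ for $v\in K$ and hence $\alpha(\bullet)=\beta(\bullet)$, after which the label-wise product mixture produces stable sets $S\cup T$ of $G$ (no $A$--$B$ edges, and $S\cap K=T\cap K$ by construction) whose marginals recover $x$. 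One small remark worth making explicit: since $K$ separates, both $A$ and $B$ are non-empty, so $G_A$ and $G_B$ really are \emph{proper} induced subgraphs and the minimality hypothesis applies. It is also worth observing that your argument is the degenerate special case of the paper's Lemma~\ref{harmSSP} on harmonious cutsets: there the cutset is partitioned into parts that are pairwise complete, the stable sets must be forced into a nested position via Lemmas~\ref{nestedlem} and~\ref{forcenested} before the two sides can be matched, whereas for a clique cutset the ``at most one vertex per stable set'' structure makes the matching of the two convex combinations immediate. So your route is both correct and consistent with the machinery the paper later develops for the more general cutsets.
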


A generalisation of clique separators was introduced 
by Chudnovsky et al.~\cite{CRST10} in the context of colouring $K_4$-free
graphs without odd holes. 
A tuple $(X_1,\ldots,X_s)$ of disjoint subsets of the vertex set of a graph $G$ is 
\emph{$G$-harmonious} if 
\begin{itemize}
\item any induced path with one endvertex in $X_i$ and the other in $X_j$
has even length if and only if $i=j$; and
\item if $s\geq 3$ then $X_1,\ldots, X_s$ are pairwise complete to each other.
\end{itemize}

A pair of subgraphs $\{ G_1, G_2 \}$ of $G=(V,E)$ is a \emph{separation} of $G$ if $V(G_1)\cup V(G_2)=V$ and $G$ has no edge between $V(G_1) \setminus V(G_2)$ and $V(G_2) \setminus V(G_1)$.
If both $V(G_1) \setminus V(G_2)$ and $V(G_2) \setminus V(G_1)$ are non-empty, the separation is \emph{proper}. 

A vertex set $X$ is called a \emph{harmonious cutset} if there
is a proper separation $(G_1,G_2)$ of $G$ so that $X=V(G_1)\cap V(G_2)$
and if there exists a partition $X=(X_1,\ldots,X_s)$ so that $(X_1,\ldots,X_s)$
is $G$-harmonious.	

We prove:
\begin{lemma}\label{noharmlem}
If a $t$-imperfect graph contains a harmonious
cutset then it also contains a proper induced  subgraph that is $t$-imperfect.
In particular,
no minimally $t$-imperfect graph  admits a harmonious cutset.
\end{lemma}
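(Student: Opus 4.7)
The second statement follows immediately from the first: every proper induced subgraph is a proper $t$-minor, and by definition a minimally $t$-imperfect graph has every proper $t$-minor $t$-perfect. I therefore focus on the main claim. Let $G$ be $t$-imperfect with harmonious cutset $X$, arising from a proper separation $(G_1,G_2)$ with $V(G_1)\cap V(G_2)=X$ and partition $X=X_1\cup\cdots\cup X_s$; take $G_1,G_2$ to be the induced subgraphs on their respective vertex sets, both proper induced subgraphs of $G$. The plan is to assume for contradiction that both $G_1$ and $G_2$ are $t$-perfect and deduce that $G$ itself is $t$-perfect.

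The first step is to show that every induced odd cycle $C$ of $G$ lies wholly in $G_1$ or in $G_2$. Otherwise $C$ visits $X$-vertices $v_1,\ldots,v_k$ in cyclic order with $v_j\in X_{a_j}$, joined by subpaths $P_j$ of $C$, and each $P_j$ is induced in $G$ (as a subpath of the induced cycle $C$). The harmonious condition forces $|P_j|$ to be even iff $a_j=a_{j+1}$. For $s\le 2$ the cyclic sequence $(a_j)$ has an even number of changes, so $|C|$ is even, contradicting $|C|$ odd. For $s\ge 3$, the pairwise completeness of the $X_i$'s forces any two differently labelled $X$-vertices on $C$ to be adjacent in $G$; since $C$ is induced they must also be adjacent on $C$, and a brief case analysis in $k$ shows this leaves only labellings for which either all $|P_j|$ are even (whence $|C|$ even) or every vertex of $C$ lies in $X$ (whence $V(C)$ misses $V(G_i)\setminus V(G_{3-i})$, contrary to our assumption that $C$ spans both sides).

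Combining this with the fact that every edge of $G$ lies in one of $G_1,G_2$, the defining inequalities of $\tstab(G)$ split cleanly, giving $\tstab(G)=\{x\in\mathbb{R}^{V(G)}:x|_{V(G_i)}\in\tstab(G_i)\text{ for }i=1,2\}$. By the assumed $t$-perfection of $G_1,G_2$ this set equals $\{x:x|_{V(G_i)}\in\ssp(G_i)\}$, so it remains to prove that this latter set is contained in $\ssp(G)$. Given such an $x$, write $x|_{V(G_i)}=\sum_k\lambda_{i,k}\chi_{S_{i,k}}$ with $S_{i,k}$ stable in $G_i$. One checks that $A\cup B$ is a stable set of $G$ precisely when $A,B$ are stable in $G_1,G_2$ respectively and $A\cap X=B\cap X$; conversely every stable set of $G$ splits in this way. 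By the transportation lemma the two decompositions then glue into a convex combination of stable sets of $G$ equal to $x$, provided the pushforward distributions of $\{S_{1,k}\cap X\}$ and $\{S_{2,k}\cap X\}$ on the stable sets of $G[X]$ can be made to coincide.

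This matching of pushforwards is the main obstacle, since a priori the two distributions only share the same mean $x|_X$. To overcome it I would exploit the fact that, by the harmonious condition, $G[X]$ is a particularly simple perfect graph: complete $s$-partite for $s\ge 3$, bipartite for $s=2$, and edgeless for $s=1$. In each case $\ssp(G[X])$ admits a transparent description, and I would refine the decompositions $\lambda_1,\lambda_2$ so that the conditional distribution of $S\cap X$ inside each type---a single $X_j$ for $s\ge 3$, each part of the bipartition for $s=2$---is a canonical one depending only on $x|_X$. The two pushforwards then coincide, the transportation coupling succeeds, and $G$ is shown to be $t$-perfect, contradicting our standing assumption.
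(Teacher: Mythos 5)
Your reduction of the second statement to the first is fine, and your overall architecture --- restrict $z\in\tstab(G)$ to $G_1$ and $G_2$, use their assumed $t$-perfection to obtain convex combinations of stable sets on each side, and then glue along $X$ --- is the same as the paper's (its Lemma~\ref{harmSSP} plus the corollary following it). Your preliminary claim that every induced odd cycle lies wholly in one side is not needed for the direction you use: for $z\in\tstab(G)$ one only needs $z\rstr{G_i}\in\tstab(G_i)$, which is immediate because every induced odd cycle of the induced subgraph $G_i$ is already an induced odd cycle of $G$; the reverse inclusion $\{x: x\rstr{G_i}\in\tstab(G_i)\}\subseteq\tstab(G)$ plays no role.

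The genuine gap is the step you yourself flag as ``the main obstacle'': making the two pushforward distributions of $S\cap X$ coincide. You assert this can be done by refining each decomposition to ``a canonical one depending only on $x\rstr{X}$'', but you neither specify that canonical form nor prove that an arbitrary decomposition can be brought into it --- and this is where all the work, and the only essential use of harmoniousness, lies. The paper's resolution has two ingredients. First, an exchange lemma (Lemma~\ref{forcenested}): by repeatedly swapping a connected component $K$ of $G[S\cup T]$ between two stable sets $S,T$, one makes the traces on $X_1$ into a chain and the traces on $X_2$ into a reversed chain; the parity condition on induced $X_i$--$X_j$ paths is exactly what guarantees that such a component never meets $X_1$ in both $S$ and $T$, so each swap makes progress. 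Without harmoniousness this exchange argument fails, and knowing only that $G[X]$ is complete multipartite (or bipartite, or edgeless) does not produce the coupling. Second, a uniqueness lemma (Lemma~\ref{nestedlem}): a convex combination of a chain of sets is determined by its value, which is what forces the two nested pushforwards, having the same mean $z\rstr{X}$, to be literally identical. For $s\ge 3$ there is a further step you also do not address: each stable set meets at most one part $X_j$, the decompositions are partitioned accordingly, and a counting argument equalises the sizes of corresponding groups on the two sides before the $s\le 2$ case is invoked part by part. As written, your proof therefore does not go through at its central step.
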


For the proof we need a bit of preparation.

\begin{lemma}\label{nestedlem}
Let $S_1\subsetneq\ldots\subsetneq S_k$ and 
$T_1\subsetneq\ldots\subsetneq T_\ell$ be nested subsets of 
a finite set $V$.
Let $\sigma:=\sum_{i=1}^k\lambda_i\charf{S_i}$ and $\tau:=\sum_{j=1}^\ell\mu_j\charf{T_j}$
be two convex combinations in $\R^V$ with non-zero coefficients.
If $\sigma=\tau$
then $k=\ell$, $\lambda_i=\mu_i$ and $S_i=T_i$ for all $i=1,\ldots,k$.
\end{lemma}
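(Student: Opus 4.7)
The plan is to induct on $k$ and exploit the level-set structure forced by nestedness. Writing $S_0=\emptyset$, nestedness means that every $v\in S_k$ lies in a unique set $S_m\setminus S_{m-1}$, and consequently
\[
\sigma(v)=\sum_{i\geq m}\lambda_i \qquad\text{for } v\in S_m\setminus S_{m-1}.
\]
Since all $\lambda_i$ are strictly positive, these partial sums are strictly decreasing in $m$, and in particular all nonzero. Thus the support of $\sigma$ is exactly $S_k$, and the \emph{smallest} positive value attained by $\sigma$ is $\lambda_k$, attained precisely on $S_k\setminus S_{k-1}$. The same analysis applies to $\tau$.

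The base case is $k=1$, where $\sigma=\charf{S_1}$ is a $0/1$ vector; the only way $\tau$ can be $0/1$ while being a convex combination with positive coefficients of nested sets is that $\ell=1$ and $T_1=S_1$ (otherwise the smallest positive value of $\tau$ would be $\mu_\ell<1$). For the induction step, the support comparison forces $S_k=T_\ell$, while matching the minimum positive values and their preimages yields $\lambda_k=\mu_\ell$ and $S_k\setminus S_{k-1}=T_\ell\setminus T_{\ell-1}$, hence $S_{k-1}=T_{\ell-1}$. Subtracting the common term and rescaling gives
\[
\frac{1}{1-\lambda_k}\sum_{i=1}^{k-1}\lambda_i\charf{S_i}
 \;=\;\frac{1}{1-\mu_\ell}\sum_{j=1}^{\ell-1}\mu_j\charf{T_j},
\]
which is an equality of convex combinations indexed by shorter nested chains (both nonempty provided $k\geq 2$, and if $k\geq 2$ then necessarily $\ell\geq 2$ too, else $\tau$ would be $0/1$ forcing $\lambda_k=1$ and contradicting the positivity of the other $\lambda_i$). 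The induction hypothesis applied to these shorter chains then delivers $k-1=\ell-1$, $S_i=T_i$ and $\lambda_i/(1-\lambda_k)=\mu_i/(1-\mu_\ell)$ for $i<k$; combined with $\lambda_k=\mu_\ell$ this gives $\lambda_i=\mu_i$ for all $i$.

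There is no real obstacle here: the whole argument hinges on the observation that nestedness turns a convex combination of indicator vectors into a step function whose level sets and level values are uniquely read off from the vector itself. The only place requiring a small amount of care is ruling out the degenerate cases $k=1<\ell$ or $\ell=1<k$, which is handled by noting that the minimum positive value of the corresponding $0/1$ side would be $1$, forcing the other side to have a single coefficient equal to $1$.
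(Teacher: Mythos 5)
Your proof is correct. It rests on the same underlying fact as the paper's argument --- that nestedness makes $\sigma$ a ``staircase'' whose level sets and level values can be read off uniquely --- but you peel the chain from the opposite end and induct on a different quantity. The paper first normalises so that $S_1=T_1=\emptyset$ and collapses elements with identical membership patterns, then looks at an element $s$ of the \emph{smallest} non-empty set: evaluating $\sigma=\tau$ at $s$ gives $\lambda_1=\mu_1$ and $S_2=T_2=\{s\}$, after which $s$ is deleted from all sets, the two bottom coefficients are merged, and the induction runs on $|S_k|$. You instead peel the \emph{top} layer: the common support gives $S_k=T_\ell$, the minimum positive value and its preimage give $\lambda_k=\mu_\ell$ and $S_{k-1}=T_{\ell-1}$, and after subtracting $\lambda_k\charf{S_k}$ and rescaling you induct on $k$. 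Your route is arguably a little cleaner, since it avoids the paper's element-identification step (which is also the one slightly delicate point in the paper's write-up, where it is asserted that $S_2$ and $T_2$ become singletons), and you are careful about the only genuinely fragile spots --- that $\ell\geq 2$ whenever $k\geq 2$, and the $k=1$ base case. Both arguments are elementary and of comparable length; neither buys extra generality over the other.
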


The lemma is not new. It appears in the context of submodular functions, where
it may be seen to assert that the \emph{Lov\'asz extension} of a set-function 
is well-defined; see Lov\'asz~\cite{Lov83}. For the sake of completeness, we give 
a proof here.

\begin{proof}
By allowing $\lambda_1$ and $\mu_1$ to be~$0$, we may clearly assume that $S_1=\emptyset=T_1$. 
Moreover, if two elements $u,v\in V$ always appear together in the sets $S_i$, $T_j$ 
then we may omit one of $u,v$ from all the sets. So, in particular, 
we may assume $S_2$ and $T_2$ 
to be singleton-sets.
 
Let $s$ be the unique element of $S_2$. 
Then $\sum_{i=2}^k\lambda_i=\sigma_s=\tau_s\leq\sum_{j=2}^\ell\mu_j$.
By symmetry, we also get $\sum_{i=2}^k\lambda_i\geq \sum_{j=2}^\ell\mu_j$, and thus we have equality. 
We deduce that $T_2=\{s\}$, and that $\lambda_1=\mu_1$ as 
$\lambda_1=1-\sum_{i=2}^k\lambda_i=1-\sum_{j=2}^\ell\mu_j=\mu_1$.
Then 
\[
(\lambda_1+\lambda_2)\charf{S_1}+\sum_{i=3}^k\lambda_i\charf{S_i\sm\{s\}}
= (\mu_1+\mu_2)\charf{T_1}+\sum_{j=3}^\ell\mu_j\charf{T_j\sm\{s\}}
\]
are two convex combinations. Induction on $|S_k|$ now finishes the proof, where 
we also use that $\lambda_1=\mu_1$.
\end{proof}

\begin{lemma}\label{forcenested}
Let $G$ be a graph, and 
let $(X,Y)$ be a $G$-harmonious tuple (with possibly $X=\emptyset$ or $Y=\emptyset$).
If $S_1,\ldots,S_k$ are stable sets
then there are stable sets $S'_1,\ldots,S'_k$ so that
\begin{enumerate}[\rm (i)]
\item $S'_1\cap X\subseteq\ldots\subseteq S'_k\cap X$;
\item $S'_1\cap Y\supseteq\ldots\supseteq S'_k\cap Y$; and
\item $\sum_{i=1}^k\charf{S'_i}=\sum_{i=1}^k\charf{S_i}$.
\end{enumerate}
\end{lemma}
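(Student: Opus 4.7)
My plan is to first establish the $k=2$ case by constructing an explicit ``pairwise swap'' of two stable sets, and then to lift it to arbitrary~$k$ by iterated adjacent swaps controlled by a suitable inversion count. A pairwise swap will replace stable sets $(A,B)$ by stable sets $(A',B')$ with $\charf{A'}+\charf{B'}=\charf{A}+\charf{B}$, $A'\cap X\subseteq B'\cap X$, and $A'\cap Y\supseteq B'\cap Y$.

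For the pairwise case, I would form the auxiliary bipartite graph $H$ on vertex set $A\triangle B$ whose edges are the $G$-edges running between $A\sm B$ and $B\sm A$, and set
\[
\alpha := \bigl((A\sm B)\cap X\bigr)\cup\bigl((B\sm A)\cap Y\bigr), \qquad
\beta := \bigl((A\sm B)\cap Y\bigr)\cup\bigl((B\sm A)\cap X\bigr).
\]
The key claim to prove is that no connected component of $H$ meets both $\alpha$ and $\beta$. Granted this, let $D$ be the union of those $H$-components meeting $\alpha$, so $\alpha\subseteq D$ and $D\cap\beta=\emptyset$, and put $A':=A\triangle D$ and $B':=B\triangle D$. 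Because $D$ is a union of $H$-components, no $G$-edge can connect $A\sm D$ to $D\cap B$, so $A'$ and $B'$ remain stable, and the characteristic-vector sum is preserved. A direct computation then yields $A'\cap X=A\cap B\cap X$ and $B'\cap X=(A\cup B)\cap X$, together with the analogue on $Y$ with $\cap$ and $\cup$ swapped, supplying both nestedness conditions.

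For the claim I would argue by contradiction: if some $H$-component contains vertices $u\in\alpha$ and $v\in\beta$, take a shortest $H$-path $u=w_0,w_1,\ldots,w_\ell=v$. Stability of $A$ and $B$ rules out $G$-edges $w_iw_{i+2}$ (same $H$-side), while minimality of the path rules out $G$-edges $w_iw_j$ with $j-i\geq 3$ of opposite parity, since any such edge would give a shortcut in $H$. Hence the path is induced in $G$ of length~$\ell$, and $\ell$ is even precisely when $u$ and $v$ lie on the same side of $H$. A case analysis over the four combinations of $u,v\in X$ or $u,v\in Y$ then produces a parity clash with the defining parity condition of the $G$-harmonious pair~$(X,Y)$.

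To pass from $k=2$ to arbitrary $k$, I would define the inversion count
\[
\Phi(S_1,\ldots,S_k) := \sum_{i<j}|(S_i\sm S_j)\cap X| + \sum_{i<j}|(S_j\sm S_i)\cap Y|,
\]
which vanishes precisely when the $S_i$ are nested as required. As long as $\Phi>0$, some adjacent pair $(S_i,S_{i+1})$ violates at least one of the two conditions, so I apply the pairwise swap at that position. A short bookkeeping argument, using that replacing $(T_i,T_{i+1})$ by $(T_i\cap T_{i+1},T_i\cup T_{i+1})$ leaves $|T_p\sm T_i|+|T_p\sm T_{i+1}|$ unchanged for $p<i$ (and symmetrically for $q>i+1$), shows that $\Phi$ strictly decreases at each step, so the process terminates with nested sets. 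The main obstacle is the parity case analysis underpinning the key claim; the rest is arithmetic.
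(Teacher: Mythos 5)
Your proof is correct and rests on the same mechanism as the paper's: exchanging along connected components of the union of two stable sets and using the parity condition on induced paths imposed by $G$-harmoniousness to rule out a conflicting exchange. The differences are only organisational --- you perform one simultaneous swap (handling $X$ and $Y$ together via $\alpha$, $\beta$) and terminate via the inversion count $\Phi$, whereas the paper splits the exchange into two sequential claims (first nesting on $X$, then on $Y$ while preserving the $X$-traces) and iterates in an explicit order.
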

\begin{proof}
We start with two easy claims. First:
\begin{equation}\label{Xsmall}
\begin{minipage}[c]{0.8\textwidth}\em
For any two stable sets $S,T$ there are stable sets $S'$
and $T'$ such that $\charf S+\charf T=\charf{S'}+\charf{T'}$ 
and $S'\cap X\subseteq T'\cap X$.
\end{minipage}\ignorespacesafterend 
\end{equation} 
Indeed, assume there is an $x\in (S\cap X)\sm T$. 
Denote by $K$ the component
of the induced 
graph $G[S\cup T]$ that contains $x$, and 
consider the symmetric differences $\tilde S=S\triangle K$ and 
$\tilde T=T\triangle K$. 
Clearly, $\charf{S}+\charf{T}=\charf{\tilde S}+\charf{\tilde T}$. 
Moreover, $K$ meets $X$ only in $S$ as otherwise $K$ would contain an induced 
$x$--$(T\cap X)$ path, which then has necessarily odd length. This, however, is impossible
as $(X,Y)$ is $G$-harmonious. Therefore, $x\notin \tilde S\cap X\subset S\cap X$. 
By repeating this exchange argument for any remaining $x'\in(\tilde S\cap X)\sm \tilde T$,
we arrive at the desired stable sets $S'$ and $T'$. 
This proves~\eqref{Xsmall}.

We need a second,  similar assertion:
\begin{equation}\label{Ylarge}
\begin{minipage}[c]{0.8\textwidth}\em
For any two stable sets $S,T$ with $S\cap X\subseteq T\cap X$
there are stable sets $S'$ and $T'$
such that $\charf S+\charf T=\charf{S'}+\charf{T'}$, 
$S'\cap X= S\cap X$ and $S'\cap Y\supseteq T'\cap Y$.
\end{minipage}\ignorespacesafterend 
\end{equation} 
To see this, assume there is a $y\in (T\cap Y)\sm S$, and
let $K$ be the component of $G[S\cup T]$ containing $y$,
and set $\tilde S=S\triangle K$ and $\tilde T=T\triangle K$. 
The component $K$ may not meet $T\cap X$, as
then it would contain an induced $y$--$(T\cap X)$ path. This path would
have even length, contradicting the definition of a $G$-harmonious tuple.
As above, we see, moreover, that $K$ meets $Y$
only in $T$; otherwise there would be an induced odd $y$--$(S\cap Y)$ path,
which is impossible. Thus, $\tilde S$, $\tilde T$ satisfy the first two 
conditions we want to have for $S',T'$, while $(\tilde T\cap Y)\sm\tilde S$
is smaller than $(T\cap Y)\sm S$. Again repeating the argument yields $S',T'$
as desired.
This proves~\eqref{Ylarge}.

\medskip
We now apply~\eqref{Xsmall} iteratively 
to $S_1$ (as $S$) and each of $S_2,\ldots,S_k$ (as $T$) 
in order to obtain stable sets $R_1,\ldots, R_k$ with $R_1\cap X\subseteq R_i\cap X$ for 
every $i=2,\ldots, k$ and $\sum_{i=1}^k\charf{S_i}=\sum_{i=1}^k\charf{R_i}$.
We continue applying~\eqref{Xsmall}, first to $R_2$ and each of $R_3,\ldots, R_k$,
then to the resulting $R'_3$ and each of $R'_4,\ldots, R'_k$ and so on, 
until we arrive at  stable sets $T_1,\ldots, T_k$ 
 with 
$\sum_{i=1}^k\charf{S_i}=\sum_{i=1}^k\charf{T_i}$
that are nested on $X$: $T_1\cap X\subseteq\ldots\subseteq T_k\cap X$.

In a similar way, we use~\eqref{Ylarge} to force the stable sets to become
nested on $Y$ as well. First, we apply~\eqref{Ylarge} to $T_1$ (as $S$)
and to each of $T_2,\ldots, T_k$ (as $T$), then to the resulting $T'_3$ 
and each of $T'_4,\ldots,T'_k$, and so on. Proceeding in this manner, we 
obtain the desired stable sets $S'_1,\ldots, S'_k$. 
\end{proof}

\begin{lemma}\label{harmSSP}
Let $(G_1,G_2)$ be a proper separation of a graph
$G$ so that $X=V(G_1)\cap V(G_2)$
is a harmonious cutset. 
Let $z\in\mathbb Q^{V(G)}$ be so that $z\rstr{G_1}\in\ssp(G_1)$ and 
$z\rstr{G_2}\in\ssp(G_2)$. Then $z\in \ssp(G)$.
\end{lemma}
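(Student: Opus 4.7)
The plan is to clear denominators and write
\[
Nz\rstr{G_1}=\sum_{i=1}^{N}\charf{A_i}, \qquad Nz\rstr{G_2}=\sum_{i=1}^{N}\charf{B_i}
\]
for some integer $N$ and stable sets $A_i$ of $G_1$ and $B_i$ of $G_2$ (with repetitions allowed), and then to permute and modify both sequences, preserving these sums, until $A_i\cap X=B_i\cap X$ for every $i$. Once that is achieved, each $C_i := A_i\cup B_i$ is a stable set of $G$: no edges arise inside $A_i$ or inside $B_i$, and none cross $V(G_1)\sm X$ and $V(G_2)\sm X$ by the separation property. Since $A_i\cap B_i\subseteq X$ then coincides with $A_i\cap X=B_i\cap X$, inclusion--exclusion yields $\sum_i\charf{C_i}=Nz\rstr{G_1}+Nz\rstr{G_2}-Nz\rstr{X}=Nz$, so $z\in\ssp(G)$.

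The matching will be produced by combining Lemmas~\ref{forcenested} and~\ref{nestedlem} applied to the harmonious partition $(X_1,\ldots,X_s)$. When $s=1$, the pair $(X_1,\es)$ is $G_1$-harmonious because induced paths in $G_1$ between vertices of $X_1$ have even length, so Lemma~\ref{forcenested} gives a sequence with $A'_1\cap X\subseteq\ldots\subseteq A'_N\cap X$ and Lemma~\ref{nestedlem} pins down the multiset $\{A'_i\cap X\}$ uniquely from $\sum_i\charf{A'_i\cap X}=Nz\rstr{X}$; the $B$-side yields the same multiset, and the matching is a reindexing. For $s=2$ I would apply Lemma~\ref{forcenested} with $(X_1,X_2)$; Lemma~\ref{nestedlem} controls each marginal, and the nesting (increasing on $X_1$, decreasing on $X_2$) forces the joint multiset of pairs $(A'_i\cap X_1,A'_i\cap X_2)$ by pairing the $k$-th smallest $X_1$-intersection with the $k$-th largest $X_2$-intersection, so $\{A'_i\cap X\}$ is again canonical.

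The hard case is $s\geq 3$. Here the pairwise-complete assumption is essential: every stable set of $G_1$ or $G_2$ intersects at most one $X_j$, so each $A_i$ has a well-defined class $j\in\{0,1,\ldots,s\}$ (with $j=0$ meaning $A_i\cap X=\es$). The plan is to apply Lemma~\ref{forcenested} sequentially: first with $(X_1,\es)$ on the whole sequence, and then, for $j=2,\ldots,s$, with $(X_j,\es)$ on the sub-sequence of indices whose current stable set is empty on every earlier $X_{j'}$, $j'<j$. What needs checking is that a step-$j$ operation cannot reintroduce a non-empty intersection with an earlier $X_{j'}$: in the symmetric-difference step~\eqref{Xsmall}, the component $K$ lies in $G_1[S\cup T]$, but $S$ and $T$ belong to the current sub-sequence and so $(S\cup T)\cap X_{j'}=\es$, forcing $K\cap X_{j'}=\es$ as well. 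After all $s$ rounds, the non-empty subsets appearing in the nested sequence at step $j$ are precisely $\{x\in X_j: Nz(x)\geq k\}$ for $k=1,\ldots,\max_{x\in X_j}Nz(x)$, a multiset that depends only on $z\rstr{X_j}$ (and not on the sub-sequence length). Hence $\{A^*_i\cap X\}$ is the disjoint union of these canonical class-multisets together with a number of empty sets determined by $z\rstr{X}$; the $B^*_i$'s acquire the same multiset, and the matching then closes the argument as in the first paragraph.
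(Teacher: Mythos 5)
Your proof is correct, and for $s\le 2$ it coincides with the paper's argument: the same application of Lemma~\ref{forcenested} to the pair $(X_1,X_2)$ followed by Lemma~\ref{nestedlem} to identify the two nested multisets of traces on $X$ and hence to match the stable sets index by index. For $s\ge 3$ you take a genuinely different route. The paper partitions the stable sets on each side into classes $\mathcal S_0,\ldots,\mathcal S_s$ according to which part $X_j$ they meet, proves $\vert\mathcal S_j\vert=\vert\mathcal T_j\vert$ by exhibiting a vertex of $X_j$ common to all members of $\mathcal S_j$, packages each class into a vector $y^j$ supported on $G^j=G-\bigcup_{r\ne j}X_r$, applies the $s\le 2$ case to each $G^j$, and finally writes $z=\sum_j\tfrac{m_j}{m}y^j$. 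You instead keep a single global list of $N$ stable sets and nest it sequentially, part by part, observing (correctly) that the exchange components of~\eqref{Xsmall} stay inside $S\cup T$ and hence cannot re-populate a part $X_{j'}$ that has already been emptied, and that pairwise completeness guarantees every set meeting $X_j$ survives into the round-$j$ subsequence; by Lemma~\ref{nestedlem} the non-empty traces on each $X_j$ then form the canonical level-set multiset of $Nz\rstr{X_j}$, and padding with empty sets makes the two $N$-element multisets of traces on $X$ identical, so a single reindexing produces the matched pairs and the unions $A_i\cup B_i$ finish the proof. Your version buys a shorter endgame --- no cardinality comparison per class and no recursion through the graphs $G^j$ --- at the price of the bookkeeping needed to certify that later rounds do not disturb earlier ones; both arguments ultimately rest on the same two lemmas and on the fact that a stable set meets at most one $X_j$.
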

The  lemma generalises the result by Chudnovsky et al.~\cite{CRST10} 
that $G=G_1\cup G_2$ is  $4$-colourable if $G_1$ and $G_2$ are $4$-colourable. 
\begin{proof}[Proof of Lemma~\ref{harmSSP}]
Let $(X_1, \ldots,X_s)$ be a $G$-harmonious partition of $X$.
As $z\rstr{G_j}\in \ssp(G_j)$, for $j=1,2$,  we can express $z\rstr{G_1}$ as a convex combination 
of stable sets $S_{1},\ldots, S_{m}$ of $G_1$, and $z\rstr{G_2}$
as a convex combination of stable sets 
$T_{1},\ldots, T_{m'}$ of $G_2$. Since $z$ is a rational vector, 
we may even assume that
\[
z\rstr{G_1}=\frac{1}{m}\sum_{i=1}^m\charf{S_{i}}\text{ and }z\rstr{G_2}=\frac{1}{m}\sum_{i=1}^m\charf{T_{i}}.
\]
Indeed, this can be achieved by repeating stable sets.

\medskip
We first treat the case when $s\leq 2$.
If $s=1$, then set $X_2=\emptyset$,
so that whenever $s\leq 2$, we have $X=X_1\cup X_2$.
  
Using Lemma~\ref{forcenested}, we find stable sets 
$S'_{1},\ldots, S'_{m}$ of $G_1$ so that 
$z\rstr{G_1}=\frac{1}{m}\sum_{i=1}^m\charf{S'_{i}}$ and 
\[
S'_{1}\cap X_1\subseteq\ldots\subseteq S'_{m}\cap X_1,
\text{ and }S'_{1}\cap X_2\supseteq\ldots\supseteq S'_{m}\cap X_2
\] 
holds. Analogously, we obtain a convex combination $z\rstr{G_2}=\frac{1}{m}\sum_{i=1}^m\charf{T'_{i}}$
of
stable sets $T'_1,\ldots,T'_m$ of $G_2$ that are increasingly nested on $X_1$ 
and decreasingly nested on $X_2$.

Define $\overline S_{1}\subsetneq \ldots\subsetneq \overline S_{k}$
to be the distinct restrictions of the sets $S'_{i}$ to $X_1$. More formally, 
let $1= i_1<\ldots < i_k<i_{k+1}=m+1$ be so that 
\[
\overline S_{t} = S'_{i}\cap X_1\text{ for all }i_{t}\leq i< i_{t+1}
\]
We set, moreover, $\lambda_{t}=\tfrac{1}{m}(i_{t+1}-i_t)$. Equivalently, 
 $m\lambda_{t}$ is  the number of $S'_{i}$ with $\overline S_{t} = S'_{i}\cap X_1$.
Then $z\rstr{X_1}=\sum_{t=1}^k\lambda_{t}\charf{\overline S_{t}}$ is a convex combination.

We do exactly the same  in $G_2$ in order to 
obtain $z\rstr{X_1}=\sum_{t=1}^k\mu_{t}\charf{\overline T_{t}}$, where 
the sets $\overline T_{t}$ are the distinct restrictions of the $T'_i$ to $X_1$.
With Lemma~\ref{nestedlem}, we deduce first that
$\overline S_t=\overline T_t$ and $\lambda_t=\mu_t$ for all $t$, from which we get that 
\[
S'_i\cap X_1 = T'_i\cap X_1\text{ for all }i=1,\ldots, m.
\]
The same argument, only applied to the restrictions of $S'_i$ and of $T'_i$ to $X_2$,
yields that also 
\[
S'_i\cap X_2 = T'_i\cap X_2\text{ for all }i=1,\ldots, m.
\]
Thus, $R_i=S'_i\cup T'_i$ is, for every $i=1,\ldots, m$, a stable set of $G$. Consequently,
$z=\tfrac{1}{m}\sum_{i=1}^m\charf{R_i}$ is a convex combination of stable sets
and thus a point of $\ssp(G)$.

\medskip

It remains to treat the case when the harmonious cutset has at least three parts,
that is, when $s\geq 3$. 
We claim that 
there are sets $\mathcal S_0,\mathcal S_1,\ldots,\mathcal S_s$ 
of stable sets of $G_1$
so that 
\begin{enumerate}[\rm (a)]
\item\label{conda} $z\rstr{G_1}=\frac{1}{m}\sum_{j=0}^s\sum_{S\in\mathcal S_j}\charf{S}$
and $\sum_{j=0}^s|\mathcal S_j|=m$;
\item\label{condb} for $j=1,\ldots, s$ if $S\in\mathcal S_j$ then $X_j\cap S$ is non-empty; and
\item\label{condc} for $j=0,\ldots, s$ if $S,T\in\mathcal S_j$ then $X_j\cap S\subseteq X_j\cap T$
or $X_j\cap S\supseteq X_j\cap T$.
\end{enumerate}
Moreover, there are analogous sets $\mathcal T_0,\mathcal T_1,\ldots,\mathcal T_s$
for $G_2$.

\medskip
To prove the claim note first that each $S_i$ meets at most one of the sets $X_j$ 
as each two induce a complete bipartite graph. Therefore, we can 
partition $\{S_1,\ldots,S_m\}$ into sets $\mathcal S'_0,\ldots,\mathcal S'_s$
so that~\eqref{conda} and~\eqref{condb} are satisfied.
Next, we apply Lemma~\ref{forcenested} to each $\mathcal S_j'$ 
and $(X_j,\emptyset)$ in order
to obtain sets $\mathcal S''_j$ that satisfy~\eqref{conda} and~\eqref{condc}
but not necessarily~\eqref{condb};
property~\eqref{conda} still holds as Lemma~\ref{forcenested} guarantees
$ \sum_{S\in \mathcal S'_j} \charf{S}=  \sum_{S\in \mathcal S''_j} \charf{S}$
for each $j$. 
If~\eqref{condb} is violated, then only because for some $j\neq 0$ 
there is $S\in\mathcal S''_j$ that is not only disjoint from $X_j$ 
but also from all other $X_{j'}$. 
In order to repair~\eqref{condb} we 
remove all stable sets $S$ in $\bigcup_{j=1}^s\mathcal S''_j$
that are disjoint from $\bigcup_{j=1}^s X_j$ from their respective 
sets and add them to  $\mathcal S''_0$. The resulting sets 
$\mathcal S_0,\mathcal S_1,\ldots,\mathcal S_s$ 
then satisfy~\eqref{conda}--\eqref{condc}. 
The proof for the $\mathcal T_j$ is the same.
\medskip

As a consequence of~\eqref{conda} and~\eqref{condb}
it follows  for $j=0,1,\ldots, s$ that  
\begin{equation}\label{zonXj}
\sum_{S\in \mathcal{{S}}_j} \charf{S \cap X_j} =
m\cdot z\rstr{X_j}
= \sum_{T\in \mathcal{{T}}_j} \charf{T \cap X_j}
\end{equation}

Now, consider $j\neq 0$. Then, by~\eqref{condb} and~\eqref{condc},
there is a vertex $v\in X_j$ that lies in every $S\in\mathcal S_j$. 
Thus, we have 
$\sum_{S\in \mathcal S_j} \charf{S}(v)= |\mathcal S_j|$.

Evaluating~\eqref{zonXj} at $v\in X_j$, we obtain 
\[
|\mathcal S_j|= m\cdot z(v)=\sum_{T\in \mathcal{{T}}_j} \charf{T}(v)\leq |\mathcal T_j|.
\]
Reversing the roles of $\mathcal S_j$ and $\mathcal T_j$, we
also get $|\mathcal T_j|\leq|\mathcal S_j|$, and thus that 
$|\mathcal T_j|=|\mathcal S_j|$, as long as $j\neq 0$. 
That this also holds for $j=0$ follows from 
$m=\sum_{j=0}^s |\mathcal S_j| = \sum_{j=0}^s |\mathcal T_j|$,
so that we get  
$
m_j:=\vert\mathcal S_j\vert = \vert\mathcal T_j\vert \mbox{ for every } j=0,1, \ldots, s.
$

Together with~\eqref{zonXj} this implies, in particular, that 
\[
\frac{1}{m_j}\sum_{S\in\mathcal S_j}\charf{S\cap X_j}=
\frac{1}{m_j}\sum_{T\in\mathcal T_j}\charf{T\cap X_j}
\]
We may, therefore,  define a vector $y^j$ on $V(G)$ by
setting 
\begin{equation}\label{yjdef}
y^j \rstr{G_1} := \frac{1}{m_j}\sum_{S\in\mathcal S_j}\charf{S}
\emtext{ and }
y^j \rstr{G_2} := \frac{1}{m_j}\sum_{T\in\mathcal T_j}\charf{T}
\end{equation}

For any $j=0,\ldots, s$, 
define $G^j=G-\bigcup_{r\neq j}X_r$, and observe that $X_j$ is a harmonious cutset of $G^j$
consisting of only one part. (That is, $X_j$ is $G^j$-harmonious.)  
Moreover, as~\eqref{yjdef} shows,
the restriction of $y^j$ to $G_1\cap G^j$
lies in $\ssp(G_1\cap G^j)$, while the restriction to $G_2\cap G^j$ lies in $\ssp(G_2\cap G^j)$.
Thus, we can apply the first part of this proof, when $s\leq 2$, in order to
deduce that $y^j\in \ssp(G^j)\subseteq\ssp(G)$. 

To finish the proof we observe, with~\eqref{conda} and~\eqref{yjdef}, that 
\[
z=\sum_{j=0}^s\frac{m_j}{m}y^j
\]
As, by~\eqref{conda}, $\sum_{j=0}^sm_j=m$, this 
means that $z$ is a convex combination of points in $\ssp(G)$, 
and thus itself an element of $\ssp(G)$.
\end{proof}

\begin{corollary}
Let $(G_1,G_2)$ be a proper separation of $G$ so that $X=V(G_1)\cap V(G_2)$
is a harmonious cutset. Then $G$ is $t$-perfect if and only if $G_1$
and $G_2$ are $t$-perfect.
\end{corollary}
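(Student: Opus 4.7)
The plan is to tackle the two implications separately. The forward direction, ``$G$ $t$-perfect implies $G_1, G_2$ $t$-perfect,'' is immediate: each $G_i$ arises from $G$ as the induced subgraph on $V(G_i)$, that is, by deleting the vertices of $V(G)\setminus V(G_i)$. Since vertex deletion preserves $t$-perfection (as noted in Section~2), both $G_1$ and $G_2$ inherit $t$-perfection from $G$.

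For the converse, assume $G_1$ and $G_2$ are $t$-perfect. I want to show $\tstab(G)\subseteq\ssp(G)$. The plan is to first reduce to showing this containment on rational points only. This reduction works because $\tstab(G)$ is defined by inequalities with integer coefficients and is bounded, hence a rational polytope: its vertices are rational, so $\tstab(G)$ is the convex hull of finitely many rational points. Since $\ssp(G)$ is convex, it then suffices to verify $z\in\ssp(G)$ for every rational $z\in\tstab(G)$.

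Given such a rational $z$, I would next check that each restriction $z\rstr{G_i}$ lies in $\tstab(G_i)$. This is routine once we use that $G_i$ is an induced subgraph of $G$: every edge of $G_i$ is an edge of $G$, and every induced odd cycle of $G_i$ is also an induced odd cycle of $G$. Consequently all defining inequalities of $\tstab(G_i)$ are implied by the corresponding inequalities of $\tstab(G)$, and $z\rstr{G_i}\in\tstab(G_i)$ follows. The $t$-perfection assumption on $G_i$ then upgrades this to $z\rstr{G_i}\in\ssp(G_i)$ for $i=1,2$, and Lemma~\ref{harmSSP} closes the argument by producing $z\in\ssp(G)$.

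The heavy lifting has already been done in Lemma~\ref{harmSSP}, so no real obstacle remains; the only delicate point is matching the rationality hypothesis of that lemma, which is precisely what the polytope-reduction above is designed to handle.
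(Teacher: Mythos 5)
Your proposal is correct and follows essentially the same route as the paper: restrict a rational point of $\tstab(G)$ to each side, use $t$-perfection of the $G_i$ to land in $\ssp(G_i)$, and invoke Lemma~\ref{harmSSP}; your justification for reducing to rational points (rationality of the polytope $\tstab(G)$) is just a more explicit version of the paper's remark that the rational case extends to real $z$.
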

\begin{proof}
Assume that $G_1$ and $G_2$ are $t$-perfect, and consider a rational
point $z\in\tstab(G)$.
Then $z|G_1\in\ssp(G_1)$ and $z|G_2\in\ssp(G_2)$, which means
that Lemma~\ref{harmSSP} yields $z\in\ssp(G)$. Since this is true for all 
rational $z$ it extends to real $z$ as well. 
\end{proof}

The corollary directly implies Lemma~\ref{noharmlem}.

\section{$P_5$-free graphs}\label{secmainres}

Let $\mathcal F$ be the set of graphs consisting of $P_5$,
$K_4$, $W_5$, $C^2_7$, $\aweb{10}{2}$ and $\aweb{13}{3}$ 
together with the three graphs in Figure~\ref{K4fig}.
Note that the latter three graphs all contain $K_4$ as a $t$-minor:
for~(a) and~(b) $K_4$ is obtained by a $t$-contraction at any vertex of degree~$2$, 
while for~(c) both vertices of degree~$2$ need to be $t$-contracted.
In particular, every graph in $\mathcal F$ besides $P_5$ is $t$-imperfect.
We say that a graph is \emph{$\mathcal F$-free} if it contains 
none of the graphs in $\mathcal F$ as an induced subgraph.

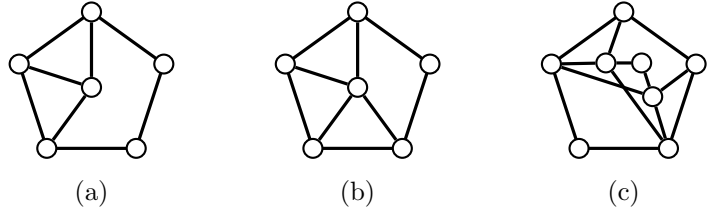
\begin{figure}[ht]
\centering
\begin{tikzpicture}[scale=1,auto]

\def\radius{1cm}
\def\angle{360/5}
\def\captionpos{(270:1.4*\radius)}

\def\hshift{3.5cm}

\begin{scope}[shift={(0,0)}]
\drawfivehole
\draw[hedge] (c) -- (v0);
\draw[hedge] (c) -- (v1);
\draw[hedge] (c) -- (v2);
\node at \captionpos {(a)};
\end{scope}

\begin{scope}[shift={(\hshift,0)}]
\drawfivehole
\draw[hedge] (c) -- (v0);
\draw[hedge] (c) -- (v1);
\draw[hedge] (c) -- (v2);
\draw[hedge] (c) -- (v3);
\node at \captionpos {(b)};
\end{scope}

\def\uvshift{0.4}

\begin{scope}[shift={(2*\hshift,0)}]
\drawfive
\node[hvertex] (v) at (-18:\uvshift*\radius){};
\node[hvertex] (u) at (126:\uvshift*\radius){};
\draw[hedge] (v0) -- (u) -- (v1);
\draw[hedge] (u) -- (v3);
\draw[hedge] (v3) -- (v) -- (v4);
\draw[hedge] (v) -- (v1);
\node[hvertex] (x) at (54:\uvshift*\radius){};
\draw[hedge] (u) -- (x) -- (v);
\node at \captionpos {(c)};
\end{scope}

\end{tikzpicture}

\caption{Three graphs that $t$-contract to $K_4$}
\label{K4fig}
\end{figure}

We prove a lemma that implies directly Theorem~\ref{thmmainres}:
\begin{lemma}\label{mainlem}
Any $\mathcal F$-free graph is $t$-perfect.
\end{lemma}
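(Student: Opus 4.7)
The plan is a proof by induction on $|V(G)|$. Assume $G$ is $\mathcal{F}$-free. By the induction hypothesis every proper induced subgraph of $G$ is $\mathcal{F}$-free and hence $t$-perfect. If $G$ admits any harmonious cutset then, by the contrapositive of Lemma~\ref{noharmlem} applied to this inductive $t$-perfection of proper induced subgraphs, $G$ is itself $t$-perfect. Since a clique of size at least three is always a harmonious cutset (the only induced path between two of its vertices is the single edge, of odd length, and pairwise completeness is automatic), this also subsumes clique separators, so I may assume $G$ has no harmonious cutset.

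The crucial step is to prove that $G$ is near-bipartite. Suppose for contradiction some vertex $v$ has $G - N(v)$ non-bipartite, and pick a shortest induced odd cycle $C$ in $G - N(v)$. Since $G - N(v)$ is $P_5$-free and any induced cycle $C_{2k+1}$ with $k \geq 3$ contains an induced $P_5$, one has $|C| = 3$ or $|C| = 5$. If $|C| = 3$, I would analyse a shortest path from $v$ to $C$ through some $x \in N(v)$: then $x$ has at most one neighbour in the triangle (else an induced $K_4$ appears), and analysing further vertices attached to $V(C) \cup \{v, x\}$ under $P_5$-freeness and the no-clique-separator hypothesis yields one of $K_4$, $W_5$, a graph from Figure~\ref{K4fig}, or an induced $P_5$. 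If $|C| = 5$, say $C = v_0 v_1 v_2 v_3 v_4$, then every $w \in N(v)$ has some non-neighbour on $C$ (else $\{w\} \cup V(C)$ induces $W_5$), and $P_5$-freeness sharply restricts how $w$ attaches to $C$. The three graphs of Figure~\ref{K4fig} encode exactly the exceptional extension patterns (one vertex adjacent to three consecutive, or to four consecutive vertices of $C$, or two such vertices linked through a common neighbour), so excluding them together with $W_5$ leaves only attachment patterns that either produce an induced $P_5$ through $v$ or create a harmonious cutset separating $v$ from $C$.

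With $G$ near-bipartite, Proposition~\ref{propnearbip} furnishes an induced subgraph that is an odd wheel, an even M\"obius ladder, or one of $\aweb{7}{1}, \aweb{10}{2}, \aweb{13}{3}, \aweb{13}{4}, \aweb{19}{7}$. I rule each out: the wheels $W_3 = K_4$ and $W_5$ lie in $\mathcal{F}$, while $W_{2k+1}$ for $k \geq 3$ has a rim of length $\geq 7$ and so contains an induced $P_5$. The M\"obius ladder $\aweb{4}{0}$ is $K_4$, and for $t \geq 1$ the vertices $0, 2t+2, 4t+3, 2t, 4t+2$ induce a $P_5$ in $\aweb{4t+4}{2t}$ (a direct distance check). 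The antiweb $\aweb{7}{1}$ is isomorphic to $C_7^2$ via $i \mapsto 3i \pmod 7$ and hence lies in $\mathcal{F}$; $\aweb{10}{2}$ and $\aweb{13}{3}$ are in $\mathcal{F}$; and both $\aweb{13}{4}$ (e.g.\ on vertices $0,5,12,4,9$) and $\aweb{19}{7}$ (e.g.\ on $0,8,16,7,15$) contain an induced $P_5$. Each case contradicts $\mathcal{F}$-freeness, completing the induction. The main obstacle I anticipate is the case analysis establishing near-bipartiteness, particularly the $|C| = 5$ subcase: the three graphs of Figure~\ref{K4fig} are needed precisely because each $t$-contracts to $K_4$ without containing it as an induced subgraph, so forbidding $K_4$ alone does not suffice.
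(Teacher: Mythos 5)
Your overall skeleton matches the paper's (reduce to a graph with no harmonious cutset, show it is near-bipartite, finish with Proposition~\ref{propnearbip}), and your endgame is correct and even more explicit than the paper's: the induced $P_5$'s you exhibit in $\aweb{4t+4}{2t}$, $\aweb{13}{4}$ and $\aweb{19}{7}$ check out, as does $\aweb{7}{1}\cong C_7^2$. But the ``crucial step'' you name --- that an $\mathcal F$-free graph with no harmonious cutset is near-bipartite --- is exactly where the proof lives, and your treatment of it is a sketch of intent rather than an argument. Two concrete problems. First, you never construct or verify a harmonious cutset: in your $|C|=5$ subcase you write that the bad attachment patterns ``create a harmonious cutset separating $v$ from $C$,'' but producing the partition $X=X_1\cup X_2$ of $N(x)\cap N(C)$ and checking the parity of \emph{every} induced $X_i$--$X_j$ path is the bulk of the paper's Lemma~\ref{any5hole} (it needs Lemmas~\ref{nbtypeslem} and~\ref{nbtypestwolem} and a delicate analysis of paths $upqv$ through and outside $N(C)$). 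Asserting the conclusion does not discharge it.

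Second, and more structurally, your triangle case is not just unproven but anchored to nothing. The paper does \emph{not} prove your claimed statement directly; it first invokes Proposition~\ref{propfivehole} (the $K_4$-free case of the perfect graph theorem) to dispose of graphs with no odd hole and to guarantee that the remaining graph contains a $5$-hole, then shows every $5$-hole is dominating, and only then handles a non-dominating triangle $T$ by analysing its position \emph{relative to a dominating $5$-hole} (Lemma~\ref{lemmainresnot}). You omit the perfect-graph-theorem step entirely, so in your $|C|=3$ subcase there may be no $5$-hole anywhere in $G$; in that situation none of the graphs of Figure~\ref{K4fig} (all of which contain $5$-holes) nor $W_5$ nor $C_7^2$ can arise, and your promised outcome ``one of $K_4$, $W_5$, a graph from Figure~\ref{K4fig}, or an induced $P_5$'' is not available. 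You would have to show that a perfect, $K_4$-free, $P_5$-free graph with no harmonious cutset has every triangle dominating --- a statement the paper never proves and that your sketch gives no handle on. You need either to supply that argument or, as the paper does, to route the odd-hole-free case through perfection and carry a dominating $5$-hole into the triangle analysis.
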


We first examine how a vertex may 
position itself relative to a $5$-cycle in an $\mathcal F$-free graph.

\begin{lemma}\label{nbtypeslem}
Let $G$ be an $\mathcal F$-free graph.
If $v$ is a neighbour of a $5$-hole $C$ in $G$ then 
$v$ has either exactly two neighbours in $C$, and these are non-consecutive in $C$; 
or $v$ has exactly three neighbours in $C$, and these are not all consecutive.
\end{lemma}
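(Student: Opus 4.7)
The plan is a straightforward case analysis on the number $k = |N(v) \cap V(C)|$ of neighbours $v$ has on the $5$-hole $C = c_0c_1c_2c_3c_4$. Since $G$ is $\mathcal F$-free, in each case I aim to exhibit an induced subgraph lying in $\mathcal F$ (usually involving $v$ and all of $C$) whenever the configuration is disallowed by the lemma. The key observation is that the forbidden list $\mathcal F$ contains $P_5$, $W_5$, and the two graphs (a) and (b) of Figure~\ref{K4fig}, which are precisely the augmentations of an induced $5$-cycle by a vertex with two consecutive neighbours (when we extend a path), with three consecutive neighbours, with four neighbours, and with five neighbours.

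Concretely, I would argue as follows. If $k = 1$, say $v \sim c_0$ only, then $v c_0 c_1 c_2 c_3$ is an induced $P_5$. If $k = 2$ with consecutive neighbours $c_0, c_1$, then $v c_1 c_2 c_3 c_4$ is an induced $P_5$. If $k = 3$ with three consecutive neighbours, say $c_0, c_1, c_2$, then $\{v, c_0, c_1, c_2, c_3, c_4\}$ induces exactly the graph (a) of Figure~\ref{K4fig}. If $k = 4$, say $v \not\sim c_4$, then $\{v, c_0, \ldots, c_4\}$ induces the graph (b) of Figure~\ref{K4fig}. Finally, if $k = 5$, then $\{v\} \cup V(C)$ induces $W_5$. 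Each of these is a member of $\mathcal F$, contradicting the hypothesis. The remaining configurations are exactly the two allowed ones in the lemma.

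There is no real obstacle here beyond bookkeeping: each case is resolved by observing which induced subgraph $v$ together with (part of) $C$ forms, and checking it against $\mathcal F$. The only minor subtlety is confirming that in each case the claimed subgraph is genuinely \emph{induced}, which follows from the facts that $C$ is an induced cycle and that the adjacencies of $v$ to $V(C)$ are specified exactly. Note that up to the rotational symmetry of $C_5$, the only triple of vertices of $C$ which is \emph{not} three consecutive is (a rotation of) $\{c_0,c_1,c_3\}$, so "three neighbours, not all consecutive" is indeed the unique remaining three-neighbour pattern, matching the statement of the lemma.
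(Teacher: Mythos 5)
Your proof is correct and matches the paper's argument exactly: the paper also runs through the possible neighbourhood types of $v$ on $C$ (up to symmetry) and rules out the bad ones by exhibiting an induced $P_5$, one of the graphs (a), (b) of Figure~\ref{K4fig}, or $W_5$, leaving only the two configurations in the statement. The only difference is presentational — the paper organises the cases via Figure~\ref{nbtypesfig} rather than by the count $k=|N(v)\cap V(C)|$.
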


\begin{figure}[ht]
\centering
\begin{tikzpicture}[scale=0.8,auto]

\def\radius{1cm}
\def\angle{360/5}
\def\captionpos{(270:1.4*\radius)}

\def\vshift{3.5cm}
\def\hshift{-3.3cm}

\begin{scope}
\drawfivehole
\node at \captionpos {(a) \cmark};
\end{scope}

\begin{scope}[shift={(\vshift,0)}]
\drawfivehole
\draw[hedge] (c) -- (v0);
\node at \captionpos {(b) \xmark};
\end{scope}

\begin{scope}[shift={(2*\vshift,0)}]
\drawfivehole
\draw[hedge] (c) -- (v0);
\draw[hedge] (c) -- (v1);
\node at \captionpos {(c) \xmark};
\end{scope}

\begin{scope}[shift={(3*\vshift,0)}]
\drawfivehole
\draw[hedge] (c) -- (v0);
\draw[hedge] (c) -- (v2);
\node at \captionpos {(d) \cmark};
\end{scope}

\begin{scope}[shift={(0,\hshift)}]
\drawfivehole
\draw[hedge] (c) -- (v0);
\draw[hedge] (c) -- (v1);
\draw[hedge] (c) -- (v2);
\node at \captionpos {(e) \xmark};
\end{scope}

\begin{scope}[shift={(\vshift,\hshift)}]
\drawfivehole
\draw[hedge] (c) -- (v0);
\draw[hedge] (c) -- (v1);
\draw[hedge] (c) -- (v3);
\node at \captionpos {(f) \cmark};
\end{scope}

\begin{scope}[shift={(2*\vshift,\hshift)}]
\drawfivehole
\draw[hedge] (c) -- (v0);
\draw[hedge] (c) -- (v1);
\draw[hedge] (c) -- (v2);
\draw[hedge] (c) -- (v3);
\node at \captionpos {(g) \xmark};
\end{scope}

\begin{scope}[shift={(3*\vshift,\hshift)}]
\drawfivehole
\draw[hedge] (c) -- (v0);
\draw[hedge] (c) -- (v1);
\draw[hedge] (c) -- (v2);
\draw[hedge] (c) -- (v3);
\draw[hedge] (c) -- (v4);
\node at \captionpos {(h) \xmark};
\end{scope}

\end{tikzpicture}

\caption{The types of neighbours of a $5$-hole}
\label{nbtypesfig}
\end{figure}
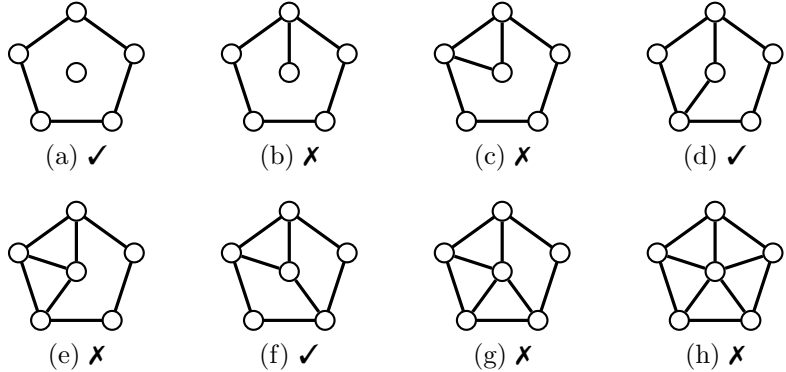

\begin{proof}
See Figure~\ref{nbtypesfig} for the possible types of neighbours (up to isomorphy). 
Of these, (b) and (c) contain an induced $P_5$; (e) and (g) are the same as (a) and (b) in Figure~\ref{K4fig}
and thus
in $\mathcal F$; (h) is $W_5$. 
Only (d) and (f) remain.
\end{proof}

\begin{lemma}\label{nbtypestwolem}
Let $G$ be an $\mathcal F$-free graph, and 
let  $u$ and $v$ be two non-adjacent vertices such that both of them have precisely three neighbours in a $5$-hole $C$. Then $u$ and $v$ have either all three or exactly two non-consecutive neighbours in $C$ in common.
 \end{lemma}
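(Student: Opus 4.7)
The plan is to first invoke Lemma~\ref{nbtypeslem}, which tells us that $N(u)\cap V(C)$ and $N(v)\cap V(C)$ are each of type~(f) in Figure~\ref{nbtypesfig}, i.e.\ sets of three vertices consisting of two consecutive vertices of $C$ together with one further vertex at distance two. Label $C=c_0c_1c_2c_3c_4c_0$; by rotating and reflecting we may assume $N(u)\cap V(C)=\{c_0,c_1,c_3\}$.

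Next I would enumerate the five possible type-(f) neighbourhoods for $v$ on $C$, namely
$\{c_0,c_1,c_3\}$, $\{c_1,c_2,c_4\}$, $\{c_2,c_3,c_0\}$, $\{c_3,c_4,c_1\}$ and $\{c_4,c_0,c_2\}$. Intersecting each with $\{c_0,c_1,c_3\}$, one sees that $N(u)\cap N(v)\cap V(C)$ is either the full set $\{c_0,c_1,c_3\}$, or one of the non-consecutive pairs $\{c_0,c_3\}$, $\{c_1,c_3\}$, or a singleton ($\{c_1\}$ respectively $\{c_0\}$). The three non-singleton cases give exactly the conclusion of the lemma; notice in particular that no case produces an intersection consisting of two consecutive vertices, so there is nothing further to rule out there. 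What must be excluded are the two singleton cases.

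The main step, then, is to derive a contradiction in each of the two singleton cases by exhibiting an induced $P_5$ in $G[\{u,v\}\cup V(C)]$ and invoking $\mathcal F$-freeness. Concretely, when $N(v)\cap V(C)=\{c_1,c_2,c_4\}$ I would take the path $u\,c_0\,c_4\,v\,c_2$, and when $N(v)\cap V(C)=\{c_4,c_0,c_2\}$ I would take the path $c_1\,u\,c_3\,c_4\,v$. In each case a short verification using $u\not\sim v$, the prescribed neighbourhoods of $u,v$ in $C$, and the non-adjacencies in $C_5$ between vertices at distance two shows that the path is genuinely induced.

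The only real obstacle is picking, in each of the two singleton subcases, a $P_5$ whose three non-edges among vertices of $C$ together with the non-edges from $u,v$ to the correct vertices of $C$ all hold; beyond this, the argument is routine bookkeeping on the cyclic labelling of $C$.
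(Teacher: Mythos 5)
Your proof is correct and follows essentially the same route as the paper: reduce both $u$ and $v$ to type~(f) of Figure~\ref{nbtypesfig} via Lemma~\ref{nbtypeslem}, enumerate the possible relative positions of the two neighbourhoods on $C$ (the paper lists them up to isomorphy as (a)--(c) of Figure~\ref{twonbfig}, while you list all five rotations explicitly), and exclude the singleton-intersection configuration by exhibiting an induced $P_5$ --- both of your candidate paths $u\,c_0\,c_4\,v\,c_2$ and $c_1\,u\,c_3\,c_4\,v$ check out as induced.
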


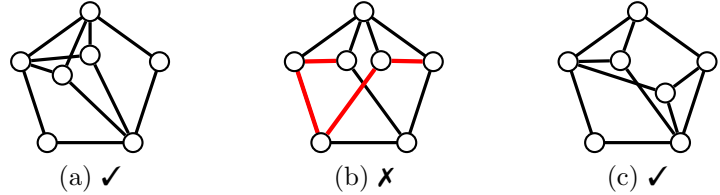
\begin{figure}[ht]
\centering
\begin{tikzpicture}[scale=0.8,auto]
\tikzstyle{redge}=[ultra thick,red]
\tikzstyle{bedge}=[ultra thick,blue]
\tikzstyle{dedge}=[ultra thick,dashed]
\tikzstyle{gedge}=[ultra thick,red]

\def\captionpos{(0,-1.3*\radius)}

\def\vshift{4.5cm}
\def\hshift{-3.5cm}
\def\radius{1.2cm}

\def\uvshift{0.4}

\begin{scope}[shift={(0,\hshift)}]
\drawfive
\node[hvertex] (u) at (90+72:\uvshift*\radius){};
\draw[hedge] (v0) -- (u) -- (v1);
\draw[hedge] (u) -- (v3);

\node[hvertex] (v) at (90:\uvshift*\radius){}; 
\draw[hedge] (v0) -- (v) -- (v1);
\draw[hedge] (v) -- (v3);
\node at \captionpos {(a) \cmark};
\end{scope}

\begin{scope}[shift={(\vshift,\hshift)}]
\drawfive
\node[hvertex] (v) at (54:\uvshift*\radius){};
\node[hvertex] (u) at (126:\uvshift*\radius){};
\draw[gedge] (u) -- (v1);
\draw[hedge] (v0) -- (u);
\draw[hedge] (u) -- (v3);
\draw[gedge] (v) -- (v4);
\draw[hedge] (v0) -- (v);
\draw[gedge] (v) -- (v2);

\draw[gedge] (v1) -- (v2);

\node at \captionpos {(b) \xmark};
\end{scope}

\begin{scope}[shift={(2*\vshift,\hshift)}]
\drawfive
\node[hvertex] (u) at (126:\uvshift*\radius){};
\draw[hedge] (v0) -- (u) -- (v1);
\draw[hedge] (u) -- (v3);

\node[hvertex] (v) at (-18:\uvshift*\radius){};
\draw[hedge] (v3) -- (v) -- (v4);
\draw[hedge] (v) -- (v1);

\node at \captionpos {(c) \cmark};
\end{scope}
\end{tikzpicture}

\caption{The possible configurations of
Lemma \ref{nbtypestwolem}}
\label{twonbfig}

\end{figure}

\begin{proof}
By Lemma~\ref{nbtypeslem}, 
both of $u$ and $v$ have to be as in~(f) of Figure~\ref{nbtypesfig}.
Figure~\ref{twonbfig} shows the possible configurations of $u$ and $v$ (up to isomorphy). 
Of these, (b) is impossible as there is an induced $P_5$---the other two 
configurations (a) and (c) may occur.
\end{proof}

A subgraph $H$ of a graph $G$ is \emph{dominating} if every vertex in $G-H$
has a neighbour in~$H$.

\begin{lemma}\label{any5hole}
Let $G$ be an $\mathcal F$-free graph. 
Then, either any $5$-hole of $G$ is dominating or $G$ contains a harmonious cutset. 
\end{lemma}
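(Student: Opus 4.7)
Assume $G$ contains a non-dominating $5$-hole $C = c_0c_1c_2c_3c_4$, so there is a vertex $w$ with no neighbour in $V(C)$. Let $W_0$ be the connected component of $G - V(C) - N_G(V(C))$ containing $w$, and set $X := N_G(W_0) \cap N_G(V(C))$, the vertices adjacent both to $V(C)$ and to $W_0$. Setting $G_1 := G - W_0$ and $G_2 := G[W_0 \cup X]$ yields a proper separation of $G$ with interface exactly $X$: both sides are non-empty (as $V(C) \subseteq V(G_1) \setminus X$ and $W_0 \subseteq V(G_2) \setminus X$), and by construction every neighbour of $W_0$ outside $W_0$ lies in $X$.

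It remains to exhibit a $G$-harmonious partition of $X$. By Lemma~\ref{nbtypeslem}, every $x \in X$ is either of type~(d) with $N_C(x) = \{c_i, c_{i+2}\}$ or of type~(f) with $N_C(x) = \{c_i, c_{i+1}, c_{i+3}\}$, giving natural classes $A_i$ and $B_i$ (indices mod~$5$). I would partition $X$ by grouping each non-empty $A_i$ with a compatible $B_j$ (one with $\{c_i, c_{i+2}\} \subseteq N_C(B_j)$, so that no induced path of length~$3$ through $C$ connects $A_i$ to $B_j$), and then verify the three harmonious conditions. For pairwise completeness (when $s \geq 3$), a non-adjacency between vertices in different classes would yield an induced $P_5$ going around $C$, in the style of $v - c_3 - c_4 - c_0 - u$ from the proof of Lemma~\ref{nbtypeslem}; Lemma~\ref{nbtypestwolem} handles the remaining non-adjacent type-(f) pairs. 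For the parity conditions, a shared $C$-neighbour gives an even induced path of length~$2$ between same-class vertices, and a path $u - c_i - c_{i+1} - v$ gives an odd induced path of length~$3$ between different-class vertices.

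The main obstacle I anticipate is ruling out induced paths of the \emph{wrong} parity—particularly odd induced paths between two vertices of the same class that detour through $W_0$ or take a long route around $C$. This is where the full force of the $\mathcal F$-free hypothesis enters: given such an odd detour through $W_0$, combining it with a shared $C$-neighbour produces a new induced $5$-hole whose interaction with the remaining vertices of $C$ should force one of the apex graphs of Figure~\ref{K4fig} or another forbidden subgraph in $\mathcal F$ to appear. I expect a subsidiary lemma analogous to Lemma~\ref{nbtypestwolem}, but handling mixed type-(d)/type-(f) pairs, will be needed to carry this through cleanly.
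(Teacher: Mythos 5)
Your setup is essentially the paper's: the cutset is the attachment set $X$ of the non-dominated part to $N(V(C))$, the separation $(G-W_0,\ G[W_0\cup X])$ is correct, and the partition of $X$ is by neighbourhood in $C$. But there are two genuine gaps. First, you allow type-(d) vertices in $X$ and build classes $A_i$ and $B_j$ around both types. In fact no vertex of $X$ can be of type (d): if $u\in X$ has $N(u)\cap V(C)=\{c_1,c_3\}$ and $x\in W_0$ is a neighbour of $u$, then $xuc_1c_5c_4$ is an induced $P_5$. This observation (the paper's claim~\eqref{xnbsclm}) is what collapses the structure: every vertex of $X$ is of type (f), and then $K_4$-freeness (using the common neighbour $x$) together with Lemma~\ref{nbtypestwolem} and the exclusion of the graph in Figure~\ref{K4fig}(c) forces $uv\in E(G)$ exactly when $N(u)\cap V(C)\neq N(v)\cap V(C)$, so $X$ splits into at most \emph{two} stable parts that are complete to each other. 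Your proposed ``grouping each non-empty $A_i$ with a compatible $B_j$'' is not a well-defined partition and, as stated, there is no reason the resulting classes would be pairwise complete or even stable.

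Second, and more seriously, the verification that the partition is $G$-harmonious --- in particular that \emph{every} induced path between two vertices of the same part has even length --- is the heart of the lemma, and you explicitly defer it (``the main obstacle I anticipate''). Exhibiting one even path between same-class vertices does not help; one must exclude all odd ones. The paper does this by taking a hypothetical odd induced path $P=upqv$ with $u,v\in X_1$ (it has length $3$ because $X_1$ is stable and $G$ is $P_5$-free), and then locating $p$ and $q$ relative to $C$: neither can lie on $C$ (by the equality of $N(u)\cap V(C)$ and $N(v)\cap V(C)$), both cannot avoid $N(C)$ (by the common-neighbourhood claim for adjacent vertices outside $N(C)$), and in the remaining case one derives $N(p)\cap V(C)=\{c_2,c_4\}$ and finds a new $5$-hole $pc_4c_5c_1c_2p$ in which $u$ has four neighbours, contradicting Lemma~\ref{nbtypeslem}. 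Without this analysis (or a substitute for it), the proposal does not establish the lemma.
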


\begin{proof}
Assume  
that there is a $5$-hole $C=c_1\ldots c_5c_1$ that fails to dominate~$G$. 
Our task consists in finding a harmonious cutset. 
We first observe: 
\begin{equation}\label{xnbsclm}
\begin{minipage}[c]{0.8\textwidth}\em
Let $u\in N(C)$ be a neighbour of some $x\notin N(C)$.
Then $u$ has exactly three neighbours in $C$, not all of which are consecutive. 
\end{minipage}\ignorespacesafterend 
\end{equation}
So, such a $u$ is as in~(f) of Figure~\ref{nbtypesfig}. 
Indeed, by Lemma~\ref{nbtypeslem}, only~(d) or~(f) in Figure~\ref{nbtypesfig}
are possible. 
In the former case, we may assume that the neighbours of $u$ in $C$
are $c_1$ and $c_3$. Then, however, $xuc_1c_4c_5$ is an induced $P_5$.
This proves~\eqref{xnbsclm}.

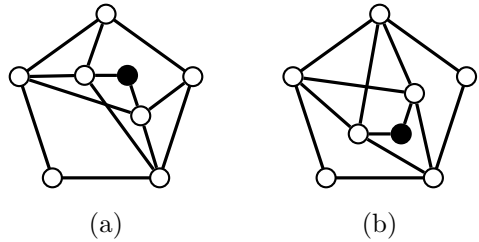
\begin{figure}[ht]
\centering
\begin{tikzpicture}[scale=0.8,auto]
\tikzstyle{redge}=[ultra thick,red]
\tikzstyle{bedge}=[ultra thick,blue]
\tikzstyle{dedge}=[ultra thick,dashed]

\def\captionpos{(0,-2)}

\def\vshift{4.5cm}
\def\hshift{-3.3cm}
\def\radius{1.5cm}

\def\uvshift{0.4}

%

\begin{scope}[shift={(\vshift,0)}]
\drawfive
\node[hvertex] (v) at (-18:\uvshift*\radius){};
\node[hvertex] (u) at (126:\uvshift*\radius){};
\draw[hedge] (v0) -- (u) -- (v1);
\draw[hedge] (u) -- (v3);
\draw[hedge] (v3) -- (v) -- (v4);
\draw[hedge] (v) -- (v1);
\node[hvertex,fill=black] (x) at (54:\uvshift*\radius){};
\draw[hedge] (u) -- (x) -- (v);
\node at \captionpos {(a)};
\end{scope}

\begin{scope}[shift={(2*\vshift,0)}]
\drawfive
\def\fifth{360/5}
\node[hvertex] (v) at (90-\fifth:\uvshift*\radius){};
\node[hvertex] (u) at (90+2*\fifth:\uvshift*\radius){};
\draw[hedge] (v0) -- (u) -- (v1);
\draw[hedge] (u) -- (v3);
\draw[hedge] (v0) -- (v) -- (v1);
\draw[hedge] (v) -- (v3);
\node[hvertex,fill=black] (x) at (90+3*\fifth:1*\uvshift*\radius){};
\draw[hedge] (u) -- (x) -- (v);
\node at \captionpos {(b)};
\end{scope}
\end{tikzpicture}
\caption{$x$ in solid black.
}
\label{figfig}
\end{figure}

Consider two adjacent vertices $y,z\notin N(C)$, and assume that there is a 
$u\in N(y)\cap N(C)$ that is not adjacent to $z$. 
We may assume that $N(u)\cap C=\{c_1,c_2,c_4\}$ by~\eqref{xnbsclm}.
Then, $zyuc_2c_3$ is an induced $P_5$, which is impossible. Thus:
\begin{equation}\label{commonnbs}
\emtext{
$N(y)\cap N(C)=N(z)\cap N(C)$ for any adjacent $y,z\notin N(C)$. 
}
\end{equation}

Next, fix some vertex $x$ that is not dominated by $C$
(and, by assumption, there is such a vertex). 
As a consequence of~\eqref{commonnbs}, $N(x)\cap N(C)$ separates
$x$ from $C$. In particular, 
\begin{equation}\label{Xsep}
\emtext{
$X:=N(x)\cap N(C)$ is a separator.
}
\end{equation}

Consider two vertices $u,v\in X$. Then, by~\eqref{xnbsclm}, 
each of $u$ and $v$ have exactly three neighbours in $C$, not all of which 
are consecutive. We may assume that $N(u)\cap V(C)=\{c_1,c_2,c_4\}$.

First, assume that $uv\in E(G)$, and suppose that 
the neighbourhoods of $u$ and $v$ in $C$ are the same.  
This, however, is impossible as then $u,v,c_1,c_2$ form a $K_4$.
Therefore, $uv\in E(G)$ implies $N(u)\cap V(C)\neq N(v)\cap V(C)$. 

Now assume $uv\notin E(G)$.
By Lemma~\ref{nbtypestwolem}, there are only two possible configurations (up to 
isomorphy)
for the neighbours of $v$ in $C$; these are (a) and (c) in Figure~\ref{twonbfig}.
The first of these,~(a) in Figure~\ref{figfig}, is impossible, 
as this is a graph of $\mathcal F$; see Figure~\ref{K4fig}~(c).
Thus, we see that $u,v$ are as in~(b) of Figure~\ref{figfig},
that is, that $u$ and $v$ have the same neighbours in $C$. 

To sum up, we have proved that:
\begin{equation}\label{cyclenbs}
uv\in E(G) \Leftrightarrow N(u)\cap V(C)\neq N(v)\cap V(C)
\emtext{ for any two }u,v\in X
\end{equation}

An immediate consequence is that the neighbourhoods in $C$ partition $X$
into stable sets $X_1, \dots, X_k$ such that $X_i$ is complete to $X_j$ whenever $i\neq j$. 
As $X$ cannot contain any triangle---together with $x$ this would result 
in a $K_4$---it follows that $k\leq 2$. 
If $k=1$, we put $X_2=\emptyset$ so that always $X=X_1\cup X_2$. 

We claim that $X$ is a harmonious cutset. 
As $X$ is a separator, by~\eqref{Xsep}, we only need to prove that $(X_1,X_2)$
is $G$-harmonious. 
For this, we have to check the parities
of induced $X_1$-paths and of $X_2$-paths; 
since $X_1$ is complete to $X_2$ any induced $X_1$--$X_2$ path 
is a single edge and has therefore odd length. 

Suppose there is an odd induced $X_1$-path or $X_2$-path. 
Clearly, we may assume there is such a path $P$ that starts in $u\in X_1$ and ends in $v\in X_1$. 
As $X_1$ is stable, and as $G$ is $P_5$-free, it follows that $P$ has length $3$. 
So, let $P=upqv$.

Let us consider the position of $p$ and $q$ relative to $C$.
We observe that neither $p$ nor $q$ can be in $C$. 
Indeed, if, for instance, $p$ was in $C$ 
then $p$ would also be a neighbour of $v$ since $N(u) \cap V(C)= N(v) \cap V(C)$,
by~\eqref{cyclenbs}. This, however, is impossible as $P$ is induced.

Next, assume  that $p,q \notin N(C)$ holds.  
Since $p$ and $q$ are adjacent, we can apply \eqref{commonnbs} to $p$ and $q$,
which results in $N(p)\cap N(C)=N(q)\cap N(C)$. 
However, as $u$ lies in  $N(p)\cap N(C)$ it then also is a neighbour of $q$, 
which contradicts that $upqv$ is induced.

It remains to consider the case when one of $p$ and $q$, $p$ say, lies in $N(C)$.
As $p$ is adjacent to $u$ but not to $v$, both of which lie in $X_1$ and are therefore
non-neighbours, 
it follows from~\eqref{cyclenbs}
that $p\notin X$. In particular, $p$ is not a neighbour of $x$, which means
that $puxv$ is an induced path. 

Suppose there is a neighbour $c\in V(C)$ of $p$ that is not adjacent to $u$. 
By~\eqref{cyclenbs}, $c$ is not adjacent to $v$ either, so that $cpuxv$ forms an induced $P_5$, 
a contradiction. 
Thus, $N(p)\cap V(C) \subseteq N(u)$ has to hold. 
By~\eqref{xnbsclm}, we may assume that the neighbours of $u$ in $C$ are precisely $c_1,c_2,c_4$. 
As $u$ and $p$ are adjacent, $p$ cannot be neighbours with both of $c_1$ and $c_2$, 
as this would result in a $K_4$. Thus, we may assume that $N(p)\cap V(C)=\{c_2,c_4\}$. 
(Note, that $p$ has at least two neighbours in $C$, by Lemma~\ref{nbtypeslem}.)

To conclude, we observe that $pc_4c_5c_1c_2p$ forms a $5$-hole, in which $u$ 
has four neighbours, namely $c_1,c_2,c_4,p$. This, however, 
is in direct contradiction to Lemma~\ref{nbtypeslem}, which means that our assumption
is false, and there is no odd induced $X_1$-path, and no such $X_2$-path either. 
Consequently,  $(X_1,X_2)$ is $G$-harmonious, and $X=X_1\cup X_2$ therefore a harmonious
cutset. 
\end{proof}

\begin{proposition}\label{propfivehole}
Let $G$ be a $t$-imperfect graph. Then either $G$ contains an odd 
hole or it contains $K_4$ or $C_7^2$ as an induced subgraph.
\end{proposition}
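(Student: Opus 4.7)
The plan is to argue the contrapositive: assuming $G$ has no induced odd hole, no induced $K_4$, and no induced $C_7^2$, I show that $G$ is $t$-perfect. The key insight is that these three hypotheses force $G$ to be a Berge graph, after which the odd-cycle inequalities defining $\tstab(G)$ collapse to clique inequalities, and perfection closes the argument.

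First I would check that $G$ contains no induced odd antihole $\overline{C_{2k+1}}$ for any $k\ge 2$. This splits into three cases: $\overline{C_5}=C_5$ is itself an odd hole; $\overline{C_7}$ is isomorphic to $C_7^2$ (the map $i\mapsto 3i\bmod 7$ turns $C_7$-distances $2,3$ into $C_7$-distances $1,2$); and for $2k+1\ge 9$ the antihole $\overline{C_{2k+1}}$ already contains an induced $K_4$, for instance on the vertices $0,2,4,6$, which are pairwise at $C_{2k+1}$-distance at least~$2$. Combined with the absence of odd holes, this makes $G$ Berge, so by the Strong Perfect Graph Theorem~\cite{SPGT} the graph $G$ is perfect.

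Next I would compare $\tstab(G)$ with the clique-constrained polytope $\mathrm{QSTAB}(G)$, defined by non-negativity and the inequalities $\sum_{v\in K}x_v\le 1$ for all cliques $K$ of $G$. Since $G$ is $K_4$-free, every clique has size at most~$3$, so $\mathrm{QSTAB}(G)$ is cut out by non-negativity, edge and triangle inequalities. On the other hand, since $G$ has no odd hole, the only induced odd cycles in $G$ are triangles, and each corresponding odd-cycle inequality is exactly the triangle inequality $x_u+x_v+x_w\le 1$. Hence $\tstab(G)=\mathrm{QSTAB}(G)$, and since $G$ is perfect, $\mathrm{QSTAB}(G)=\ssp(G)$, yielding $t$-perfection.

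There is no genuinely hard step here: once the two elementary observations about odd antiholes are in place ($C_7^2\cong\overline{C_7}$, and every longer odd antihole swallows a $K_4$), the conclusion is a one-line polyhedral remark combined with the off-the-shelf Strong Perfect Graph Theorem. The proposition is essentially a bookkeeping exercise extracting this corollary of the SPGT.
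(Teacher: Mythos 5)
Your proof is correct and follows essentially the same route as the paper: rule out odd antiholes via $\overline{C_5}=C_5$, $\overline{C_7}\cong C_7^2$ and the $K_4$ inside longer antiholes, invoke the Strong Perfect Graph Theorem to get perfection, and conclude $t$-perfection from $K_4$-freeness (you usefully spell out the $\tstab=\mathrm{QSTAB}$ step that the paper only asserts). The only difference worth noting is that the paper points out one does not need the full SPGT here, only Tucker's much easier version for $K_4$-free graphs.
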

\begin{proof}
Assume that $G$ does not contain any odd hole and neither $K_4$ nor
$C_7^2$ as an induced subgraph.
Observe that any odd antihole of length $\geq 9$ contains $K_4$.
Since the complement of a $5$-hole is a $5$-hole, 
and since $C_7^2$ is the odd antihole of length $7$, it follows that $G$ cannot contain any odd antihole at all. 

Now, by the strong perfect graph theorem it follows that $G$ is perfect.
(Note that we do not need the full theorem but only the 
far easier version for $K_4$-free graphs; see Tucker~\cite{Tucker77}.) 
Since $G$ does not contain any $K_4$ it is therefore $t$-perfect as well.
\end{proof}

\begin{lemma}\label{lemmainresnot}
Let $G$ be an $\mathcal F$-free graph.
If $G$ contains a $5$-hole, and if every $5$-hole is dominating 
then $G$ is near-bipartite.  
\end{lemma}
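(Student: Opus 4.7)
The plan is to show that for every $v \in V(G)$ the graph $H := G - N(v)$ is bipartite. Since $G$ is $P_5$-free, any induced cycle of $H$ has length $3$, $4$, or $5$: any longer induced cycle would contain an induced $P_5$ on five consecutive vertices. So it suffices to rule out induced $5$-cycles and triangles in $H$. For $5$-cycles the dominating hypothesis does the job: if $D$ were a $5$-hole in $H$, then $V(D) \cap N(v) = \emptyset$, so $v$ would be neither on $D$ (else its two $D$-neighbours would lie in $N(v)$, hence not in $H$) nor adjacent to $D$, contradicting that every $5$-hole of $G$ dominates.

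The main work is ruling out triangles. Suppose, for a contradiction, that $T = \{t_1, t_2, t_3\}$ is a triangle in $G$ with every $t_i$ non-adjacent to $v$, and fix a $5$-hole $C = c_1 c_2 c_3 c_4 c_5 c_1$ in $G$. Since $C$ is chordless, $|T \cap V(C)| \le 2$, and by Lemma~\ref{nbtypeslem} any vertex off $C$ with a neighbour on $C$ has either two non-consecutive (type~(d)) or three not-all-consecutive (type~(f)) neighbours in $C$. I would split by $|T \cap V(C)|$. When $|T \cap V(C)| = 2$, say $t_1 = c_1$ and $t_2 = c_2$, Lemma~\ref{nbtypeslem} forces $N(t_3) \cap V(C) = \{c_1, c_2, c_4\}$; the conditions $v \not\sim c_1, c_2, t_3$ then pin down $v \notin V(C)$ and $N(v) \cap V(C) = \{c_3, c_5\}$, and the vertices $v, c_3, c_4, t_3, c_1$ induce a $P_5$ in $G$, contradicting $P_5$-freeness. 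The cases $|T \cap V(C)| \in \{0, 1\}$ require a similar but more involved analysis of the type-(d)/type-(f) neighbourhoods in $C$ of $t_1, t_2, t_3$ and $v$, using $K_4$-freeness (so no $c_i$ is adjacent to all three $t_j$), Lemma~\ref{nbtypestwolem} for non-adjacent type-(f) pairs among $\{v\} \cup T$, and the rest of $\mathcal F$-freeness. In each configuration I would either exhibit an induced $P_5$ directly or apply a ``rotating $5$-hole'' step: construct a new $5$-hole $C'$ of $G$ using $v$ together with some vertices of $T \cup V(C)$, and obtain the contradiction from Lemma~\ref{nbtypeslem} applied to $C'$, because some vertex turns out to have three consecutive neighbours on $C'$ (the forbidden type~(e) of Figure~\ref{nbtypesfig}).

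The principal obstacle is the bookkeeping in the no-triangle step: each of the four vertices in $\{v\} \cup T$ has up to five possible $C$-adjacency patterns, producing many raw subcases. The unifying idea is the rotating-$5$-hole trick just described; because every $5$-hole of $G$ is dominating, whenever the current configuration does not directly yield an induced $P_5$ a fresh $5$-hole $C'$ through $v$ can always be assembled, and then Lemma~\ref{nbtypeslem} applied to $C'$ forces enough structure on another vertex to contradict $\mathcal F$-freeness. This keeps the case analysis finite and terminating.
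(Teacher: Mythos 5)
Your reduction is sound and follows the same route as the paper: since $G$ is $P_5$-free, odd holes of length at least $7$ cannot occur in the induced subgraph $G-N(v)$, and a $5$-hole avoiding $N(v)\cup\{v\}$ would fail to dominate $v$, so only triangles need to be excluded; your complete treatment of the case $|T\cap V(C)|=2$ is correct (Lemma~\ref{nbtypeslem} indeed forces $N(t_3)\cap V(C)=\{c_1,c_2,c_4\}$ and then $N(v)\cap V(C)=\{c_3,c_5\}$, giving the induced path $vc_3c_4t_3c_1$). The problem is that the cases $|T\cap V(C)|\in\{0,1\}$, which constitute the bulk of the proof, are not actually carried out. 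You defer them to ``a similar but more involved analysis'' and justify termination by the meta-claim that ``a fresh $5$-hole $C'$ through $v$ can always be assembled'' whenever no induced $P_5$ appears. That claim is precisely what has to be verified configuration by configuration; it is not a consequence of the domination hypothesis, and in the paper's own argument the rotated $5$-holes are built ad hoc from specific adjacencies (e.g.\ $c_3xc_5c_1c_2$ or $c_1c_2yzc_5$), do not always pass through $v$, and in several subcases the contradiction comes instead from a $K_4$ (e.g.\ $x,y,c_2,c_3$, or $T$ together with $c_4$) rather than from a new $5$-hole. So the promissory note is a genuine gap, not mere bookkeeping.

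A secondary point: your case split (by $|T\cap V(C)|$ for an arbitrary fixed $C$) leaves the position of $v$ relative to $C$ unconstrained in the hard cases, which multiplies the configurations you must check. The paper first splits on whether $v$ lies on \emph{some} $5$-hole. If it does, one takes $C$ through $v$, which immediately forces $T\cap V(C)\subseteq\{c_3,c_4\}$ and lets Lemma~\ref{nbtypeslem} pin down the attachments of $x,y,z$; if it does not, then $v$ must be of type~(f) with respect to every $5$-hole (type~(d) would place $v$ on a $5$-hole), which restricts $T\cap V(C)$ to at most one of $c_3,c_5$ and makes Lemma~\ref{nbtypestwolem} directly applicable to $v$ and the vertices of $T$. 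If you want to complete your proof along your own lines, you should either adopt that split or explicitly enumerate and kill each of the remaining $C$-adjacency patterns of $v,t_1,t_2,t_3$.
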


\begin{proof}
Let $G$ contain a $5$-hole, and assume every $5$-hole to be dominating.
Suppose that the lemma is false, i.e.\ that $G$ fails to be 
near-bipartite.
In particular, there is a vertex $v$ such 
that $G-N(v)$ is not bipartite, and therefore contains an induced
odd cycle $T$. As any $5$-hole is dominating and any $k$-hole with $k>5$ contains an induced $P_5$, $T$ has to be a triangle. Let $T=xyz$. 
We distinguish two cases, both of which will lead to a contradiction.

\medskip
\noindent{\bf Case:} $v$ lies in a $5$-hole $C$.\\
Let $C=c_1\ldots c_5c_1$, and
$v=c_1$. Then $T$ could meet $C$ in $0,1$ or $2$ vertices. If $T$ has two vertices
with $C$ in common, these have to be $c_3$ and $c_4$ as the others are neighbours of $v$. 
Then, the third vertex of $T$ has two consecutive neighbours in $C$, which means that by Lemma~\ref{nbtypeslem} its third neighbour in $C$ has to be $c_1=v$, which is impossible. 

Next, suppose that $T$ meets $C$ in one vertex, $c_3=z$, say. 
By Lemma~\ref{nbtypeslem},
each of $x,y$ has to have a neighbour opposite of $c_3$ in $C$, that is, 
either $c_1$ or $c_5$. As $c_1=v$, both of $x,y$ are adjacent with $c_5$. 
The vertices $x,y$ could have a third neighbour in $C$; this would necessarily
be $c_2$. However, not both can be adjacent to $c_2$ as then $x,y,c_2,c_3$ 
would induce a $K_4$. Thus, assume $x$ to have exactly $c_3$ and $c_5$ as neighbours
in $C$. This means that $C'=c_3xc_5c_1c_2c_3$ is a $5$-hole in which $y$ 
has at least three consecutive neighbours, $c_3,x,c_5$, which is impossible
(again, by Lemma~\ref{nbtypeslem}).

Finally, suppose that $T$ is disjoint from $C$. Each of $x,y,z$ has at least two neighbours
among $c_2,\ldots, c_5$, and no two have $c_3$ or $c_4$ as neighbour; otherwise
we would have found a triangle in $G-N(v)$
 meeting $C$ in exactly one vertex, and could reduce
to the previous subcase. Thus, we may assume that $x$ is adjacent to 
$c_2$ and $c_5$. Moreover, since no vertex of $x,y,z$ can be adjacent to both $c_3$ and $c_4$
(as then it would also be adjacent to $c_1$, by Lemma~\ref{nbtypeslem}) and no $c_i \in C$ can be adjacent to all vertices of $T$ (because otherwise $c_i,x,y,z$ would form a $K_4$), 
it follows that we may assume
that $y$ is adjacent to $c_2$ but not to $c_5$, while $z$
is adjacent to $c_5$ but not to $c_2$. 
Then, $c_1c_2yzc_5c_1$ is a $5$-hole in which $x$ has four neighbours,
in obvious contradiction to Lemma~\ref{nbtypeslem}.
Therefore, this case is impossible.

\medskip
\noindent{\bf Case:} $v$ does not lie in any $5$-hole.\\
Let $C=c_1\ldots c_5 c_1$ be a $5$-hole. 
Since every $5$-hole is dominating, $v$ has a neighbour in $C$, and thus 
is, by Lemma~\ref{nbtypeslem}, either as in~(f) of Figure~\ref{nbtypesfig}
or as in~(d). The latter, however, is impossible since then $v$ would 
be contained in a $5$-hole. Therefore, we may assume 
that the neighbours of $v$ in $C$ are precisely $\{ c_1, c_2, c_4\}$. 
As a consequence, $T$ can meet $C$ in at most $c_3$ and $c_5$,
both not in both as $C$ is induced.

Suppose $T=xyz$ meets $C$ in $x=c_3$. 
If $y$ is not adjacent to either of $c_1$ and $c_4$, 
then $c_1 v c_4 xy$ forms an induced $P_5$. 
If, on the other hand, $y$ is adjacent to $c_4$ then, by
Lemma~\ref{nbtypeslem}, also to $c_1$.
Thus, $y$ is either adjacent to $c_1$ or to both $c_1$ and $c_4$. 
The same holds for $z$. Since $y$ and $z$ are adjacent, they cannot both have three neighbours in $C$ (otherwise $G$ would contain a $K_4$). Suppose $N(y)\cap C = \{x, c_1\}$. But then $x c_4 c_5 c_1 y x$ forms an induced $5$-cycle in which $z$ has at least three consecutive neighbours; a contradiction to Lemma~\ref{nbtypeslem}.

Consequently, $T$ is disjoint from every $5$-hole.
By Lemma~\ref{nbtypeslem}, each of $x,y,z$ has neighbours in~$C$ as in~(d) or~(f) of Figure~\ref{nbtypesfig}.
However, if any of $x,y,z$ has only two neighbours in~$C$ as in~(d) then that vertex together
with four vertices of~$C$ forms a $5$-hole that meets~$T$---this is precisely the situation of the
previous subcase. Thus, we may assume that all vertices of $T$ 
have three neighbours in~$C$ as in~(f) of Figure~\ref{nbtypesfig}. 
If we consider the possible configurations of two non-adjacent vertices which have three 
neighbours in~$C$  (namely $v$ and a vertex of $T$) as we have done in Lemma~\ref{twonbfig},
we see that only~(a) and~(c) in Figure~\ref{twonbfig} are possible.
But then each vertex of $T$ has to be adjacent to $c_4$, which means that $T$ together with~$c_4$
induces a $K_4$, which is impossible.
\end{proof}

\begin{proof}[Proof of Lemma~\ref{mainlem}]
Suppose that $G$ is a $t$-imperfect and but $\mathcal F$-free. 
By deleting suitable vertices we may assume that every proper induced subgraph of $G$ is $t$-perfect. 
In particular, by Lemma~\ref{noharmlem}, $G$ does not admit a harmonious cutset.
Since $G$ is $t$-imperfect it contains an odd hole, by Proposition~\ref{propfivehole}, 
and since $G$ is $P_5$-free, the odd hole is of length $5$.
From Lemma~\ref{any5hole} we deduce that any $5$-hole is dominating. 
Lemma~\ref{lemmainresnot} implies that $G$ is near-bipartite. 

Noting that both \aweb{13}{4} and \aweb{19}{7}, as well 
as any M\"obius ladder or any odd wheel larger than $W_5$, 
contain an induced $P_5$, 
we see with Proposition~\ref{propnearbip} that $G$ is $t$-perfect after all.
\end{proof}

By Lemma~\ref{mainlem}, a $P_5$-free graph is either $t$-perfect or contains 
one of eight $t$-imperfect graphs as an induced subgraph. Obviously, 
checking for these forbidden induced subgraphs can be done in 
polynomial time, so that we get as immediate algorithmic consequence:

\begin{theorem}\label{polythm}
$P_5$-free $t$-perfect graphs can be recognised in polynomial time. 
\end{theorem}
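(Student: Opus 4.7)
The plan is to read Theorem~\ref{polythm} as an immediate algorithmic corollary of Lemma~\ref{mainlem}. That lemma says a $P_5$-free graph is $t$-perfect if and only if it is $\mathcal{F}$-free, i.e.\ if and only if it avoids, as induced subgraphs, the eight $t$-imperfect members of $\mathcal{F}$: namely $K_4$, $W_5$, $C^2_7$, $\aweb{10}{2}$, $\aweb{13}{3}$, and the three graphs of Figure~\ref{K4fig}. Each of these has at most $13$ vertices.

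Given an input graph $G$ on $n$ vertices, my algorithm first checks $P_5$-freeness by enumerating all $5$-element subsets of $V(G)$ and inspecting the induced subgraph on each; this takes $O(n^5)$ time. Then, for each of the eight forbidden graphs $H \in \mathcal{F} \setminus \{P_5\}$, the algorithm enumerates all $|V(H)|$-element subsets $S \subseteq V(G)$ and tests whether $G[S]$ is isomorphic to $H$. Since $|V(H)| \leq 13$, each such test takes $O(n^{13})$ time with a constant-time isomorphism check per subset, so the whole procedure runs in polynomial time.

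Correctness is immediate: if any forbidden induced subgraph is found then $G$ is not $\mathcal{F}$-free, hence, by Lemma~\ref{mainlem} (its contrapositive, since every graph in $\mathcal{F}\setminus\{P_5\}$ is itself $t$-imperfect and induced subgraphs of $t$-perfect graphs are $t$-perfect), $G$ is not $t$-perfect; conversely, if $G$ is $P_5$-free and no forbidden induced subgraph occurs, then $G$ is $\mathcal{F}$-free and therefore $t$-perfect by Lemma~\ref{mainlem}.

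There is no real obstacle here: the whole content of Theorem~\ref{polythm} has already been absorbed into Lemma~\ref{mainlem}, and the only remaining observation is the trivial one that checking for a fixed bounded-size induced subgraph in an $n$-vertex graph costs at most $O(n^{|V(H)|})$. If a sharper exponent were desired one could replace the brute-force enumeration by a bounded-search-tree or colour-coding routine, but for the polynomial-time claim of the theorem the naive enumeration already suffices.
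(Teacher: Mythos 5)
Your proposal is correct and is essentially the paper's own argument: the authors likewise derive Theorem~\ref{polythm} as an immediate consequence of Lemma~\ref{mainlem} by noting that checking for the eight fixed bounded-size forbidden induced subgraphs (plus $P_5$) can obviously be done in polynomial time. Your additional remarks on the brute-force complexity and on why each direction of the equivalence holds are just a more explicit write-up of the same reasoning.
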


We suspect, but cannot currently prove, that $t$-perfection can be 
recognised as well in polynomial time in near-bipartite graphs.

\section{Colouring}\label{colsec}

Can $t$-perfect graphs always be coloured with few colours? This 
is one of the main open questions about $t$-perfect graphs. 
A conjecture by Shepherd and Seb\H o asserts that four colours 
are always enough:

\begin{conjecture}[Shepherd; Seb\H o~\cite{Aperso}]\label{colconj}
Every $t$-perfect graph is $4$-colourable.
\end{conjecture}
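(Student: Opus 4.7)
The plan is to attack Conjecture~\ref{colconj} via a minimum counterexample, leveraging the harmonious cutset machinery developed in Section~\ref{sectionharm}. Let $G$ be a $t$-perfect graph with $\chi(G) \geq 5$ of minimum order.

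Step one is to rule out harmonious cutsets in $G$, by establishing a $4$-colouring analogue of Lemma~\ref{harmSSP}: if $(G_1,G_2)$ is a proper separation with $X = V(G_1) \cap V(G_2)$ partitioned into a $G$-harmonious tuple $(X_1,\ldots,X_s)$, then $4$-colourings of $G_1$ and $G_2$ (which exist by the minimality of $G$, since $G_1$ and $G_2$ are themselves $t$-perfect by the corollary following Lemma~\ref{harmSSP}) can be modified to agree on $X$, combining to a $4$-colouring of $G$. When $s \geq 3$, the parts are pairwise complete, and since $G$ is $K_4$-free (as $K_4$ is $t$-imperfect and vertex deletion preserves $t$-perfection) we have $s \leq 3$; each stable $X_i$ can then be monochromatically assigned colour $i$ on both sides. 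When $s \leq 2$, the parity constraint on induced $X_i$-paths should permit Kempe-chain swaps of the colour classes on each side to align the colourings on $X$, mirroring the argument of Chudnovsky, Robertson, Seymour and Thomas~\cite{CRST10} for $4$-colouring $K_4$-free graphs without odd holes.

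Step two observes that $G$ must contain an odd hole: if not, then since $G$ is $K_4$-free, all odd antiholes of length at least seven contain $K_4$, and a $5$-antihole is itself a $5$-hole, so $G$ is perfect by the $K_4$-free strong perfect graph theorem (Tucker~\cite{Tucker77}), giving $\chi(G) = \omega(G) \leq 3$, a contradiction.

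The hard part---and the reason this conjecture has resisted proof---is turning "$K_4$-free, no harmonious cutset, contains an odd hole" into a $4$-colouring. One would like to locate an odd hole $C$ and a vertex $v$ whose interaction with $C$ is sufficiently restricted (in the spirit of Lemma~\ref{nbtypeslem}) to justify a $t$-contraction or vertex deletion that decreases $\chi(G)$ by at most one while yielding a strictly smaller $t$-perfect graph, so that induction applies. The main obstacle is that, without a hypothesis like $P_5$-freeness---which enforces a dominating structure on $5$-holes (cf.\ Lemma~\ref{any5hole}) and ultimately collapses the problem to the near-bipartite case (cf.\ Lemma~\ref{lemmainresnot})---the global arrangement of odd holes in general $K_4$-free $t$-perfect graphs is poorly understood, and there is no obvious candidate site for a chromatic-reducing reduction. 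A more modest but potentially tractable intermediate step would be to prove the conjecture under the extra assumption that $G$ is near-bipartite, where Proposition~\ref{propnearbip} already pins down the possible structure.
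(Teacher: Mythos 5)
This statement is a \emph{conjecture} (due to Shepherd and Seb\H o), and the paper offers no proof of it; it remains open. Your proposal is therefore not comparable to a proof in the paper, and, as you yourself concede in your third step, it is not a proof at all: the entire content of the conjecture is concentrated in the step you label ``the hard part,'' namely converting ``$K_4$-free, no harmonious cutset, contains an odd hole'' into a $4$-colouring, and for that step you supply only the observation that no argument is known. Your steps one and two are plausible reductions (and step two is essentially Proposition~\ref{propfivehole} of the paper), but even step one is not free: Lemma~\ref{harmSSP} is a statement about the stable set polytope, and its $4$-colouring analogue is only proved by Chudnovsky et al.~\cite{CRST10} in their specific setting of $K_4$-free graphs without odd holes; the Kempe-chain alignment for $s\leq 2$ in a general $t$-perfect graph would have to be written out and is not automatic from the parity condition alone. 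So the proposal has a genuine and fatal gap, which is simply that the conjecture is unresolved.

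One concrete remark: the ``more modest intermediate step'' you suggest at the end --- proving the conjecture for near-bipartite graphs --- is in fact carried out in the paper (Proposition following Conjecture~\ref{colconj}) by a two-line argument that needs none of the harmonious-cutset or antiweb machinery: for any vertex $v$, the graph $G-N(v)$ is bipartite by definition of near-bipartiteness, and $N(v)$ induces a bipartite graph as well (otherwise $v$ together with a shortest odd cycle in $N(v)$ would form an odd wheel, contradicting $t$-perfection), so two colours on each part suffice. The paper also proves the stronger bound of three colours in the $P_5$-free case (Theorem~\ref{P5colthm}) by an entirely different route, namely the Maffray--Morel list of $4$-critical $P_5$-free graphs, each member of which is shown to be $t$-imperfect.
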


The conjecture is known to hold in a number of graph classes, 
for instance in claw-free graphs, where even three colours are already
sufficient; see~\cite{tperfect}. It is straightforward to verify the
conjecture for near-bipartite graphs:

\begin{proposition}
Every   near-bipartite $t$-perfect graph is $4$-colourable.
\end{proposition}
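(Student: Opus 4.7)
The plan is to fix any vertex $v$ and colour the two pieces $G - N(v)$ and $G[N(v)]$ separately with disjoint palettes. Since $G$ is near-bipartite, $G - N(v)$ is bipartite by definition and hence admits a proper $2$-colouring with colours $1$ and $2$; note that $v$ is isolated in $G-N(v)$, so its colour is immaterial. If I can moreover show that $G[N(v)]$ is bipartite, I will use the fresh colours $3$ and $4$ there, and the union will be a proper $4$-colouring of $G$: the only edges not already covered run between $N(v)$ and $V(G) \setminus N(v)$, and the two palettes $\{1,2\}$ and $\{3,4\}$ are disjoint.

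The whole argument therefore reduces to the claim that $G[N(v)]$ is bipartite. First, I would observe that $G[N(v)]$ is triangle-free: otherwise a triangle in $N(v)$ together with $v$ would form an induced $K_4$ in $G$, contradicting the fact that $K_4$ is $t$-imperfect and that induced subgraphs of $t$-perfect graphs remain $t$-perfect. Next, suppose for contradiction that $G[N(v)]$ contains an odd cycle; then a shortest such cycle is necessarily induced, and being triangle-free it has length at least~$5$. Call this induced odd cycle $C$. The vertices $V(C) \cup \{v\}$ then induce an odd wheel in $G$, contradicting Proposition~\ref{propnearbip}. Hence $G[N(v)]$ has no odd cycle at all and is bipartite.

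There is no real obstacle in this plan: the argument simply combines the definition of near-bipartite with the two pieces of structural information that both $K_4$ and every odd wheel are ruled out as induced subgraphs of a $t$-perfect near-bipartite graph. The proof does not need the full force of Proposition~\ref{propnearbip}; only the forbidden odd wheels are used.
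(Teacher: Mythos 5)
Your proof is correct and follows essentially the same route as the paper: colour the bipartite graph $G-N(v)$ with two colours and argue that $G[N(v)]$ must also be bipartite, since otherwise $v$ together with a shortest (hence induced) odd cycle in $N(v)$ would yield an odd wheel, which is forbidden in a $t$-perfect graph. Your separate handling of the triangle case via $K_4$ is subsumed by the odd-wheel argument (the paper views $K_4$ as the odd wheel $W_3$), but this is only a cosmetic difference.
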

\begin{proof}
Pick any vertex $v$ of a near-bipartite
and $t$-perfect graph $G$. Then $G-N(v)$ is bipartite and may be coloured 
with colours $1,2$. On the other hand, as $G$ is $t$-perfect the neighbourhood
$N(v)$ necessarily induces a bipartite graph as well; otherwise $v$ together 
with a shortest odd cycle in $N(v)$ would form an odd wheel. 
Thus we can colour the vertices in $N(v)$ with the colours $3,4$.
\end{proof}
Near-bipartite $t$-perfect graphs can, in general, not be coloured with fewer colours. 
Indeed, this is even true if we restrict ourselves further to complements
of line graphs, which is a subclass of near-bipartite graphs.
Two $t$-perfect graphs in this class that need four colours are:
$\overline{L(\Pi)}$,
the complement of the line graph of the prism, and $\overline{L(W_5)}$.
The former was found by Laurent and Seymour (see~\cite[p.~1207]{LexBible}),
while the latter was discovered by Benchetrit~\cite{YohPhD}. 
Moreover, Benchetrit showed that any $4$-chromatic $t$-perfect
complement of a line graph contains one of $\overline{L(\Pi)}$
and $\overline{L(W_5)}$ as an induced subgraph.

How about $P_5$-free $t$-perfect graphs? 
Applying insights of Seb\H o and of Sumner, 
Benchetrit~\cite{YohPhD} proved that $P_5$-free $t$-perfect graphs are $4$-colourable. 
This is not tight:
\begin{theorem}\label{P5colthm}
Every $P_5$-free $t$-perfect graph $G$ is $3$-colourable.
\end{theorem}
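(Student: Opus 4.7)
The plan is to proceed by strong induction on $|V(G)|$, letting $G$ be a minimum counterexample. Two initial reductions apply. First, if $G$ has a vertex $v$ with stable neighbourhood, then the $t$-contraction $G/v$ is again $P_5$-free and $t$-perfect, so by induction it has a proper $3$-colouring; if the contracted vertex carries colour $c$, then assigning colour $c$ to every vertex of $N(v)$ and any other colour to $v$ lifts the colouring to $G$. Hence we may assume every vertex of $G$ lies in a triangle. Second, if $G$ contains no induced $C_5$, then $G$ contains no odd hole ($P_5$-freeness rules out longer ones) and no odd antihole of length at least seven (the $7$-antihole is $C_7^2$, forbidden by Theorem~\ref{thmmainres}, while longer odd antiholes contain $K_4$), so by the strong perfect graph theorem $G$ is perfect; as $\omega(G)\le 3$, we get $\chi(G)\le 3$. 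Thus we may assume that $G$ contains an induced $C_5$.

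By Lemma~\ref{any5hole}, either $G$ has a harmonious cutset or every $5$-hole of $G$ is dominating. Suppose first that $(G_1,G_2)$ is a separation with $X=V(G_1)\cap V(G_2)$ harmoniously partitioned as $(X_1,\ldots,X_s)$. Both $G_1$ and $G_2$ are smaller $P_5$-free $t$-perfect graphs, hence $3$-colourable by induction with colourings $\phi_1,\phi_2$. When $s\ge 3$, the parts are pairwise complete, so in any proper $3$-colouring each $X_i$ is monochromatic with a colour distinct from those of the other parts; this forces $s\le 3$, and matching $\phi_1$ and $\phi_2$ on $X$ reduces to a colour permutation. When $s\in\{1,2\}$, I would argue that the even-length parity of induced $X_i$-paths permits $\phi_2$ to be Kempe-recoloured so that it agrees with $\phi_1$ on $X$, in the spirit of Lemma~\ref{harmSSP} and Lemma~\ref{forcenested}; this is the technical heart of the combination step.

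Now suppose that every $5$-hole of $G$ is dominating; then by Lemma~\ref{lemmainresnot}, $G$ is near-bipartite. Fix a dominating $5$-hole $C=c_1\ldots c_5c_1$. By Lemmas~\ref{nbtypeslem} and~\ref{nbtypestwolem}, each vertex of $V(G)\setminus V(C)$ has neighbourhood on~$C$ of type (d) or (f) of Figure~\ref{nbtypesfig}, and non-adjacent pairs of such vertices satisfy tight constraints. Any proper $3$-colouring of~$C$ has exactly one singleton colour class and two doubleton classes, and a direct check shows that every type-(d) vertex then has an admissible colour; a type-(f) vertex is problematic only if its three neighbours in $C$ span all three colours, which happens precisely when the singleton-coloured vertex of $C$ is one of its neighbours. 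By choosing the singleton-coloured vertex of $C$ appropriately, and exploiting the constraints on type-(f) vertex interactions and on triangles (from $K_4$-freeness), one extends the $3$-colouring of $C$ to all of $G$ via case analysis.

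The main obstacle will be the harmonious-cutset combination when $s\le 2$: three colours offer much less flexibility than the four used in Chudnovsky, Robertson, Seymour and Thomas' analogous $4$-colouring argument, so the proof must genuinely use both the special form of $X$ produced by Lemma~\ref{any5hole} (namely $X=N(x)\cap N(C)$ for some vertex $x$ not dominated by the $5$-hole $C$) and the full forbidden list from Theorem~\ref{thmmainres}, with $\aweb{10}{2}$ and $\aweb{13}{3}$ plausibly ruling out the most awkward configurations. The near-bipartite extension step is mostly routine once the right $3$-colouring of $C$ is chosen, but verifying that all type-(f) obstructions can be simultaneously avoided will require the $P_5$-freeness of $G$ in an essential way.
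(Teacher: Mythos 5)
Your proposal takes a completely different route from the paper, but it is not a proof: it is a plan with two acknowledged holes at exactly the places where the difficulty lies. The paper's own proof is a short deduction from the theorem of Maffray and Morel (Theorem~\ref{MMthm}), which says that a $P_5$-free graph is $3$-colourable unless it contains one of twelve specific induced subgraphs; the paper simply checks that all twelve are $t$-imperfect (five are already on the forbidden list, and each of the remaining seven $t$-contracts to $K_4$ after deleting a few vertices), so none can appear in a $t$-perfect graph. Your opening reductions are fine: the lifting of a $3$-colouring through a $t$-contraction is correct, and the perfect-graph argument in the absence of a $C_5$ is correct. But the two main branches are left open.

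First, the harmonious-cutset combination for $s\le 2$. You say the parity of induced $X_i$-paths ``permits $\phi_2$ to be Kempe-recoloured so that it agrees with $\phi_1$ on $X$'' and call this the technical heart of the step --- but no argument is given, and none of the paper's lemmas supplies one: Lemma~\ref{harmSSP} and Lemma~\ref{forcenested} are statements about convex combinations of stable sets in $\ssp$, not about proper colourings, and the Chudnovsky et al.\ result they generalise concerns $4$-colourings. With three colours and $s=2$ the two sides may distribute the three colours over $X_1\cup X_2$ in incompatible ways (e.g.\ $X_1$ monochromatic on one side but bichromatic on the other), so agreement on $X$ is not a matter of permuting colours, and a Kempe-chain fix is not obviously available. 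Second, the near-bipartite branch. Extending a $3$-colouring of a dominating $5$-hole $C$ to all of $G$ requires colouring the vertices outside $C$ consistently with \emph{each other}, not merely admissibly with respect to $C$; your sketch only addresses the latter. That this step cannot be routine is shown by $\overline{L(\Pi)}$ and $\overline{L(W_5)}$: these are $4$-chromatic, $t$-perfect, near-bipartite graphs, so your case analysis must somewhere invoke $P_5$-freeness (or the forbidden list of Theorem~\ref{thmmainres}) to exclude such configurations, and the proposal does not show how. Until both steps are carried out, the argument does not establish the theorem.
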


For the proof we use that there is a finite number 
of obstructions for $3$-colourability in $P_5$-free graphs:

\begin{theorem}[Maffray and Morel~\cite{MM12}]\label{MMthm}
A $P_5$-free graph is $3$-colourable if and only if it does not
contain $K_4$, $W_5$, $C^2_7$, $\aweb{10}{2}$, $\aweb{13}{3}$
or any of the seven graphs in~Figure~\ref{MMfig} as an induced
subgraph.
\end{theorem}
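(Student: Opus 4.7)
The plan is to combine Maffray and Morel's characterisation (Theorem~\ref{MMthm}) with our main result (Theorem~\ref{thmmainres}) to show that every obstruction to $3$-colourability in $P_5$-free graphs is actually an obstruction to $t$-perfection. Let $G$ be a $P_5$-free $t$-perfect graph, and suppose for contradiction that $G$ is not $3$-colourable. By Theorem~\ref{MMthm}, $G$ contains one of the following as an induced subgraph: $K_4$, $W_5$, $C^2_7$, $\aweb{10}{2}$, $\aweb{13}{3}$, or one of the seven graphs $H_1,\ldots,H_7$ in Figure~\ref{MMfig}.

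The first five obstructions are handled for free: any induced subgraph is in particular a $t$-minor (obtained by vertex deletions only), so Theorem~\ref{thmmainres} immediately rules out $K_4$, $W_5$, $C^2_7$, $\aweb{10}{2}=\overline{C_{10}^{2}}$ and $\aweb{13}{3}=\overline{C_{13}^{3}}$ as induced subgraphs of~$G$. This reduces the problem to showing that none of the seven graphs $H_i$ in Figure~\ref{MMfig} can occur as an induced subgraph of a $P_5$-free $t$-perfect graph.

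For each $H_i$ I would proceed by a short case inspection: either point out that $H_i$ itself already contains an induced $P_5$ (in which case it trivially cannot occur in $G$), or exhibit a sequence of $t$-contractions and vertex deletions inside $H_i$ that produces one of $K_4$, $W_5$, $C^2_7$, $\overline{C_{10}^{2}}$, $\overline{C_{13}^{3}}$, so that Theorem~\ref{thmmainres} again forbids $H_i$ from sitting inside~$G$. Recall that a $t$-contraction is allowed precisely at a vertex whose neighbourhood is stable; in many of the Maffray--Morel graphs one can spot low-degree vertices whose neighbourhood is an independent set and whose contraction produces extra triangles, typically collapsing the graph onto $K_4$ or an odd wheel.

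The only real obstacle is bookkeeping: one has to inspect Figure~\ref{MMfig} graph by graph, and for each graph either locate an induced $P_5$ or choose the right vertices at which to $t$-contract. This is a finite check on seven fixed graphs, so it is routine rather than conceptual; and since we already know from Theorem~\ref{thmmainres} that $G$ is $\mathcal F$-free, we may in fact use the whole set $\mathcal F$ (including the three graphs of Figure~\ref{K4fig}) as the target list of forbidden induced subgraphs, which gives even more flexibility when searching for an obstruction inside each $H_i$. Once each $H_i$ is dealt with, the contradiction is complete and $G$ must be $3$-colourable.
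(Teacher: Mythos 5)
There is a genuine gap, and it is not one of detail: your proposal does not prove the assigned statement. The statement is Theorem~\ref{MMthm}, the Maffray--Morel characterisation of $3$-colourability in $P_5$-free graphs. Your argument opens with ``By Theorem~\ref{MMthm}, $G$ contains one of the following\ldots'' --- that is, you assume the very theorem you are asked to prove and use it, together with Theorem~\ref{thmmainres}, to derive a different result, namely Theorem~\ref{P5colthm} (that every $P_5$-free $t$-perfect graph is $3$-colourable). What you have written is, almost verbatim, the paper's own proof of Theorem~\ref{P5colthm}: the five graphs $K_4$, $W_5$, $C^2_7$, $\aweb{10}{2}$, $\aweb{13}{3}$ are excluded because they are $t$-imperfect, and each of the seven graphs of Figure~\ref{MMfig} is shown to be $t$-imperfect by deleting the grey vertices and $t$-contracting at the black vertex to obtain $K_4$. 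As a proof of Theorem~\ref{P5colthm} this is correct and matches the paper; as a proof of Theorem~\ref{MMthm} it is circular.

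Note also that the paper itself does not prove Theorem~\ref{MMthm}: it is imported as a black box from Maffray and Morel~\cite{MM12}. A genuine proof would need (i) the easy direction, checking that each of the twelve listed graphs is $4$-chromatic and hence an obstruction to $3$-colourability, and (ii) the hard direction, a structural argument showing that every $P_5$-free graph containing none of the twelve as an induced subgraph admits a proper $3$-colouring. Your proposal addresses neither; in particular, the hard direction cannot be reached via the $t$-perfection machinery of this paper, since it concerns all $P_5$-free graphs (most of which are $t$-imperfect), whereas Theorem~\ref{thmmainres} and the $t$-minor reductions only yield information about the $t$-perfect ones.
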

(Maffray and Morel call these graphs $F_1$--$F_{12}$. The graphs 
$K_4$, $W_5$, $C^2_7$, $\aweb{10}{2}$, $\aweb{13}{3}$ are 
respectively $F_1$, $F_2$, $F_9$, $F_{11}$ and $F_{12}$.) 
A similar result was obtained by Bruce, Ho\`ang and Sawada~\cite{BHS09}, who gave a list 
of five forbidden (not necessarily induced) subgraphs. 

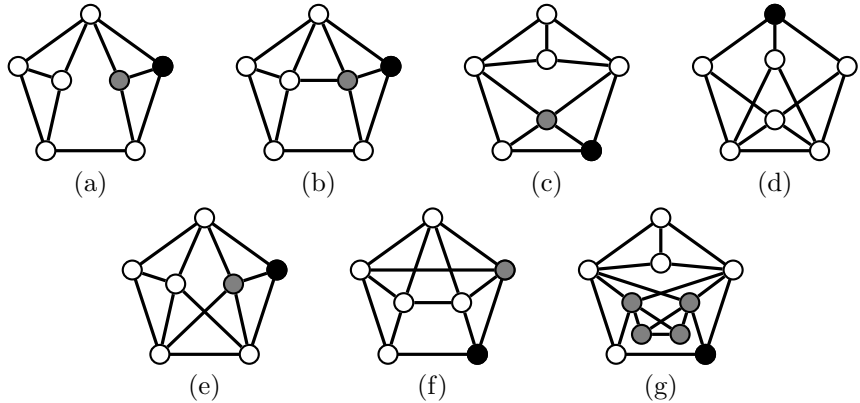
\begin{figure}
\centering
\begin{tikzpicture}

\def\hshift{3cm}
\def\vshift{-2.7cm}
\def\radius{1cm}
\def\captionpos{(0,0.5*\vshift+0.1cm)}
\def\fifth{360/5}

\def\uvshift{0.4}

\tikzstyle{delvx}=[hvertex,fill=gray]
\tikzstyle{convx}=[hvertex,fill=black]

\begin{scope}
\drawfive
 \node[hvertex,convx] (v4) at (72*4+90:\radius){}; 

\node[hvertex,delvx] (v) at (90-\fifth:\uvshift*\radius){};
\node[hvertex] (u) at (90+\fifth:\uvshift*\radius){};
\draw[hedge] (v) edge (v0) edge (v4) edge (v3);
\draw[hedge] (u) edge (v0) edge (v1) edge (v2);
\node at \captionpos {(a)};
\end{scope}

\begin{scope}[shift={(\hshift,0)}]
\drawfive
\node[hvertex,convx] (v4) at (72*4+90:\radius){}; 
 
\node[hvertex,delvx] (v) at (90-\fifth:\uvshift*\radius){};
\node[hvertex] (u) at (90+\fifth:\uvshift*\radius){};
\draw[hedge] (v) edge (v0) edge (v4) edge (v3);
\draw[hedge] (u) edge (v0) edge (v1) edge (v2) edge (v);
\node at \captionpos {(b)};
\end{scope}

\begin{scope}[shift={(2*\hshift,0)}]
\drawfive
\node[hvertex,convx] (v3) at (72*3+90:\radius){}; 
\node[hvertex] (u) at (90:\uvshift*\radius){};
\node[hvertex,delvx] (v) at (270:\uvshift*\radius){};
\draw[hedge] (u) edge (v0) edge (v4) edge (v1);
\draw[hedge] (v) edge (v1) edge (v2) edge (v3) edge (v4);
\node at \captionpos {(c)};
\end{scope}

\begin{scope}[shift={(3*\hshift,0)}]
\drawfive
\node[hvertex,convx] (v0) at (72*0+90:\radius){};
\node[hvertex] (u) at (90:\uvshift*\radius){};
\node[hvertex] (v) at (270:\uvshift*\radius){};
\draw[hedge] (u) edge (v0) edge (v2) edge (v3);
\draw[hedge] (v) edge (v1) edge (v2) edge (v3) edge (v4);
\node at \captionpos {(d)};
\end{scope}

\begin{scope}[shift={(0.5*\hshift,\vshift)}]
\drawfive
\node[hvertex,convx] (v4) at (72*4+90:\radius){};
\node[hvertex,delvx] (v) at (90-\fifth:\uvshift*\radius){};
\node[hvertex] (u) at (90+\fifth:\uvshift*\radius){};
\draw[hedge] (v) edge (v0) edge (v4) edge (v3) edge (v2);
\draw[hedge] (u) edge (v0) edge (v1) edge (v2) edge (v3);
\node at \captionpos {(e)};
\end{scope}

\begin{scope}[shift={(1.5*\hshift,\vshift)}]
\drawfive
\node[hvertex,convx] (v3) at (72*3+90:\radius){};
\node[hvertex,delvx] (v4) at (72*4+90:\radius){};
\node[hvertex] (v) at (90-1.5*\fifth:\uvshift*\radius){};
\node[hvertex] (u) at (90+1.5*\fifth:\uvshift*\radius){};
\draw[hedge] (v) edge (v0) edge (v4) edge (v3);
\draw[hedge] (u) edge (v0) edge (v1) edge (v2);
\draw[hedge] (v1) -- (v4);
\draw[hedge] (u) -- (v);

\node at \captionpos {(f)};
\end{scope}

\begin{scope}[shift={(2.5*\hshift,\vshift)}]
\drawfive
\node[hvertex,convx] (v3) at (72*3+90:\radius){};
\node[hvertex] (u) at (90:\uvshift*\radius){};
\node[hvertex,delvx] (a) at (270-\fifth:\uvshift*\radius){};
\node[hvertex,delvx] (b) at (270+\fifth:\uvshift*\radius){};
\node[hvertex,delvx] (c) at (270+25:0.6*\radius){};
\node[hvertex,delvx] (d) at (270-25:0.6*\radius){};
\draw[hedge] (u) edge (v0) edge (v4) edge (v1);
\draw[hedge] (a) edge (v4) edge (v2) edge (v1);
\draw[hedge] (b) edge (v4) edge (v3) edge (v1);
\draw[hedge] (c) -- (d);
\draw[hedge] (c) edge (a) edge (b);
\draw[hedge] (d) edge (a) edge (b);
\node at \captionpos {(g)};
\end{scope}
\end{tikzpicture}
\caption{The remaining $4$-critical $P_5$-free graphs
of Theorem~\ref{MMthm}; in Maffray and Morel~\cite{MM12} these 
are called $F_3$--$F_8$ and $F_{10}$.
In each graph,
deleting the grey vertices and then $t$-contracting at the black vertex
results in $K_4$.}\label{MMfig}
\end{figure}

\begin{proof}[Proof of Theorem~\ref{P5colthm}]
Any  $P_5$-free graph  $G$ that cannot be coloured with 
three colours contains one of the twelve induced 
subgraphs of Theorem~\ref{MMthm}. 
Of these twelve graphs, we already know that
$K_4$, $W_5$, $C^2_7$, $\aweb{10}{2}$, $\aweb{13}{3}$
are $t$-imperfect, and thus cannot be induced subgraphs of a
$t$-perfect graph. It remains to consider the seven graphs
in Figure~\ref{MMfig}. 
These graphs are $t$-imperfect, too: each can be 
turned into $K_4$ by first deleting the grey vertices and then 
performing a $t$-contraction at the respective black vertex. 
\end{proof}

We mention that Benchetrit~\cite{YohPhD} also showed that 
$P_6$-free $t$-perfect graphs are $4$-colourable.
This is tight: both $\overline{L(\Pi)}$ 
and $\overline{L(W_5)}$ (and indeed all complements of  line graphs)
are $P_6$-free. We do not know whether $P_7$-free $t$-perfect graphs
are $4$-colourable. 

We turn now to fractional colourings. A motivation for Conjecture~\ref{colconj} was certainly the fact that
the \emph{fractional chromatic number} $\chi_f(G)$ of a $t$-perfect graph~$G$
is always bounded by~$3$. 
More precisely, if $\og(G)$ denotes the \emph{odd girth} of $G$, 
that is, the length of the shortest odd cycle, then 
$\chi_f (G) = 2 \frac{\og(G)}{\og(G)-1}$ 
as long as $G$ is $t$-perfect (and non-bipartite). This follows from linear programming 
duality; see for instance Schrijver~\cite[p.~1206]{LexBible}.

Recall that a graph $G$ is perfect if and only if $\chi(H)=\omega(H)$
for every induced subgraph $H$ of $G$. As odd cycles 
seem to play a somewhat similar role for $t$-perfection as
cliques play for perfection, one might conjecture
that $t$-perfection is characterised in an analogous way: 

\begin{conjecture} \label{tpfraccol}
A graph $G$ is $t$-perfect if and only if 
$\chi_f (H) = 2 \frac{\og(H)}{\og(H)-1}$ for every non-bipartite $t$-minor $H$ of $G$.
\end{conjecture}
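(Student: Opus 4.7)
Start with the forward direction, which is largely immediate from the excerpt. If $G$ is $t$-perfect, then so is every $t$-minor $H$, since $t$-perfection is preserved by vertex deletion and by $t$-contraction. For non-bipartite $t$-perfect $H$, the linear-programming duality argument recalled just before the conjecture yields $\chi_f(H)=2\og(H)/(\og(H)-1)$, which is precisely what is needed.

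All the real work lies in the other direction, for which I would argue by contrapositive: assume $G$ is $t$-imperfect and produce a non-bipartite $t$-minor whose fractional chromatic number strictly exceeds $2\og/(\og-1)$. A natural candidate is any minimally $t$-imperfect $t$-minor $H$ of $G$, which exists because chains of proper $t$-minors starting from a finite graph are finite. Bipartite graphs are $t$-perfect, so $H$ is non-bipartite, and the shortest odd cycle of $H$---necessarily induced---gives the general lower bound $\chi_f(H)\geq 2\og(H)/(\og(H)-1)$ by monotonicity of $\chi_f$ under induced subgraphs. The conjecture thus reduces to the key claim that every minimally $t$-imperfect graph $H$ satisfies the \emph{strict} inequality $\chi_f(H)>2\og(H)/(\og(H)-1)$.

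For the minimally $t$-imperfect graphs that are currently known, this can be verified directly. In $K_4$ and in the odd wheels $W_{2k+1}$ the odd girth is $3$, while $\chi_f(K_4)=4>3$ and the fact that the apex of $W_{2k+1}$ cannot share a colour class with any other vertex forces $\chi_f(W_{2k+1})=1+\chi_f(C_{2k+1})=3+1/k>3$. The remaining known examples---the even M\"obius ladders, $C_7^2$, and the antiwebs $\aweb{10}{2}$, $\aweb{13}{3}$, $\aweb{13}{4}$ and $\aweb{19}{7}$---are all vertex-transitive, so the identity $\chi_f=n/\alpha$ applies, which for an antiweb reads $\chi_f(\aweb{n}{k})=n/(k+1)$; in each specific instance the odd girth is determined by a short case analysis and the strict inequality then follows by a direct numerical comparison.

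The main obstacle, and the reason Conjecture~\ref{tpfraccol} remains a conjecture, is that no complete classification of minimally $t$-imperfect graphs is currently available---producing such a list is itself a major open problem. The plan above therefore does not settle the conjecture in general; it reduces it to the key claim. It does, however, yield the conjecture unconditionally for any graph class in which the minimally $t$-imperfect members are fully known---within $P_5$-free graphs, for example, Theorem~\ref{thmmainres} leaves only the forbidden $t$-minors of Figure~\ref{culpritsfig} to check, and these all appear in the list just discussed.
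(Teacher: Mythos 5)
This statement is a conjecture that the paper does not (and cannot yet) prove; the paper's own discussion consists precisely of the reduction you give, namely the ``alternative but equivalent formulation'' that every minimally $t$-imperfect graph $H$ satisfies $\chi_f(H)>2\,\og(H)/(\og(H)-1)$, together with the remark that all known minimally $t$-imperfect graphs (and hence the $P_5$-free, near-bipartite and claw-free cases) have been checked. Your proposal is correct as far as it goes and takes essentially the same approach as the paper, including the honest acknowledgement that the absence of a complete list of minimally $t$-imperfect graphs is what keeps the statement a conjecture.
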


Note that the conjecture becomes false if, instead of $t$-minors, only induced subgraphs $H$
are considered. Indeed, in the $t$-imperfect graph obtained from $K_4$ by subdividing some edge twice,
all induced subgraphs satisfy the condition (but not the $t$-minor $K_4$).

An alternative but equivalent formulation of the conjecture is: 
 $\chi_f (G) > 2 \frac{\og(G)}{\og(G)-1}$ holds for every minimally $t$-imperfect 
graph $G$. It is straightforward to check that all minimally $t$-imperfect graphs
that are known to date satisfy this. In particular, it follows that the 
conjecture is true for $P_5$-free graphs, for near-bipartite graphs,
as well as for claw-free graphs; see~\cite{tperfect} for the minimally $t$-imperfect
graphs that are claw-free.

\subsection*{Acknowledgment}

We thank Oliver Schaudt for pointing out~\cite{MM12}. 
We also thank one of the referees for bringing  the work of 
Holm et al.~\cite{HTW10} to our attention, as well as 
identifying inaccuracies that led to a (hopefully!) clearer presentation 
of Theorems~\ref{polythm} and~\ref{P5colthm}.

\bibliographystyle{amsplain}
\bibliography{bibtpp5}

\vfill

\small
\vskip2mm plus 1fill
\noindent
Version \today{}
\bigbreak

\noindent
Henning Bruhn
{\tt <henning.bruhn@uni-ulm.de>}\\
Elke Fuchs
{\tt <elke.fuchs@uni-ulm.de>}\\
Institut f\"ur Optimierung und Operations Research\\
Universit\"at Ulm, Ulm\\
Germany\\

\end{document}